\numberwithin{equation}{section}	
\newtheorem{theorem}{Theorem}
\numberwithin{theorem}{section}
\newtheorem{proposition}[theorem]{Proposition}
\newtheorem{lemma}[theorem]{Lemma}
\newtheorem{definition}[theorem]{Definition}
\newtheorem{remark}[theorem]{Remark}
\newtheorem{example}[theorem]{Example}
\newtheorem{conjecture}[theorem]{Conjecture}
\newcommand{\NN}{\mathbb{N}}
\newcommand{\ZZ}{\mathbb{Z}}
\newcommand{\RR}{\mathbb{R}}
\newcommand{\PP}{\mathbb{P}}
\newcommand{\CC}{\mathbb{C}}
\date{}
\title{\textbf{Autocovariance Varieties of \\ Moving Average Random Fields}}
\author{Carlos Am\'endola, Viet Son Pham}
\newcommand{\bbn}{\mathbb{N}}
\newcommand{\bbz}{\mathbb{Z}}
\newcommand{\bbr}{\mathbb{R}}
\newcommand{\bbc}{\mathbb{C}}
\newcommand{\bbp}{\mathbb{P}}
\newcommand{\bone}{\mathds 1}
\newcommand{\calf}{{\cal F}}
\newcommand{\al}{{\alpha}}
\newcommand{\ga}{{\gamma}}
\newcommand{\Ga}{{\Gamma}}
\newcommand{\si}{{\sigma}}
\newcommand{\cov}{{\mathrm{Cov}}}	
\newcommand{\argmin}{\mathrm{argmin}}
\newcommand{\argmax}{\mathrm{argmax}}
\begin{document}

\maketitle

\begin{abstract}
\noindent We study the autocovariance functions of moving average random fields over the integer lattice $\bbz^d$ from an algebraic perspective. These autocovariances are parametrized polynomially by the moving average coefficients, hence tracing out algebraic varieties. We derive dimension and degree of these varieties and we use their algebraic properties to obtain statistical consequences such as identifiability of model parameters. We connect the problem of parameter estimation to the algebraic invariants known as euclidean distance degree and maximum likelihood degree. Throughout, we illustrate the results with concrete examples. In our computations we use tools from commutative algebra and numerical algebraic geometry.  
\end{abstract}

\section{Introduction}

Moving average random fields indexed by the integer lattice $\bbz^d$ generalize the class of discrete-time moving average processes and constitute an important statistical spatial model. They are used to model texture images (cf. \cite{Francos95}), as well as in image segmentation and restoration (cf. \cite{Krishnamurthy96}). Furthermore, they are connected to ARMA (autoregressive moving average) random fields (cf. \cite{Drapatz16} and the references therein) and the sampling problem of CARMA (continuous autoregressive moving average) random fields, in which the autocovariance functions of moving average random fields play a crucial role (cf. \cite[Section~4.3]{Pham18b}). 

A \emph{moving average random field} $(Y_t)_{t\in\bbz^d}$ of order $q=(q_1,q_2, \ldots,q_d) \in \bbn^d$ is defined by the equation
\begin{equation*}
Y_{t}=\sum_{k_1=0}^{q_1}\cdots\sum_{k_d=0}^{q_d} a_k Z_{t-k},\quad t\in\bbz^d,
\end{equation*}
where $k=(k_1, \ldots,k_d)$, $a_k$ are real coefficients and $(Z_t)_{t\in\bbz^d}$ is a real-valued zero-mean white noise (see Definition~\ref{def:MA}).
The autocovariance function 
\begin{equation*}
\ga(t)=\cov[Y_0,Y_t],\quad t\in\bbz^d,
\end{equation*}
for this type of random field is compactly supported, i.e.\ only finitely many values are nonzero. More precisely, we have $\ga(t)=0$ for every $t=(t_1, \ldots,t_d)\in\bbz^d$ with entries satisfying $|t_i|>q_i$ for at least one $i\in\{ 1,\ldots,d \}$. 

We study the autocovariance functions of moving average random fields from an algebraic perspective. Our motivation stems from the field of algebraic statistics \cite{sullivant2018algebraic}. Specifically, inspired by the concept of \textit{moment varieties} \cite{Amendola16}, here we introduce \textit{autocovariance varieties}. The \textit{moving average variety} $\mathcal{MA}_q \subseteq \PP^{N}$ (see Definition~\ref{def:autovar}) is parametrized by  $ (q_1+1) \cdots (q_d +1 )$ moving average coefficients $a_k$ where the indices $k$ satisfy $0\leq k_i \leq q_i$ for $i=1,\ldots,d$. These coefficients induce $(2q_1+1)\cdots (2q_d+1)$ nonzero autocovariance values $\ga(t)$. However, we only consider half of them since the relation $\gamma(-v) = \gamma(v)$ holds for all $v\in \ZZ^d$. 

\begin{example}\label{Ex1}
Let $d=2$ and $q=(1,1)$. Then the $Q=(1+1)(1+1)=4$ parameters $a_{00}, a_{01}, a_{10}, a_{11}$ define the autocovariances
\begin{eqnarray*}
\gamma(0,0) &=& a_{00}^2 + a_{10}^2 + a_{01}^2 + a_{11}^2, \\
\gamma(1,0) &=& a_{00} a_{10} + a_{11} a_{01},  \\
\gamma(0,1) &=& a_{00} a_{01} + a_{11} a_{10}, \\
\gamma(1,-1) &=& a_{10} a_{01},  \\
\gamma(1,1) &=& a_{00} a_{11}. 
\end{eqnarray*}
The moving average variety $\mathcal{MA}_{(1,1)} \subseteq \PP^{4}$ is expected to be 3-dimensional. We characterize it in Theorem \ref{thm:MA11}.
\end{example}

This paper is organized as follows. In Section 2, we give the main definition of a moving average random field and its autocovariance function. We define our main object of study, namely autocovariance varieties, in Section 3. We contrast the properties between moving average processes (one-dimensional) from the higher dimensional moving average random fields. In Theorem~\ref{thm:dimdeg} we establish the dimension and degree of these varieties. In Section 4 we investigate identifiability of the associated models and prove that they are algebraically identifiable. In contrast to the $d=1$ case where the degree of the fiber grows with $q$, we show that for $d>1$ there are generically only two sets of parameters that yield the same autocovariance function. Next, we study two different approaches to estimate model parameters from given samples in Section 5. First, we fit the empirical autocovariance function to the theoretical counterpart using a least squares method. Second, we consider maximum likelihood estimation. Both approaches connect nicely to concepts from algebraic statistics: respectively the ED degree and the ML degree. In Example \ref{ex:simstudy}, we conduct a simulation study comparing classical local optimization methods to numerical homotopy continuation, where we find that the numerical algebraic geometry (NAG) method performs slightly better.

We use the following notation and terminology in this paper: 
The components of a vector $u\in\bbr^d$ are given by $u_1,...,u_d$ if not stated otherwise. If $u,v\in\bbz^d$, then we set $[u,v]:=\{s\in\bbz^d\vert u_i\leq s_i\leq v_i, 1\leq i\leq d\}$, which may be an empty set. The symbol $\preceq$ denotes the lexicographic order and for $x\in\bbr^d$ and $t\in\bbz^d$ we define $x^t:=x_1^{t_1}\cdots x_d^{t_d}$. If $g\colon A\to B$ is a mapping and $y\in g(A)$, then $g^{-1}(y)$ is called fiber of $y$ and each point inside the fiber is called preimage. A statement that holds \textit{generically}, or for a \textit{generic} point, can be interpreted probabilistically as holding for almost all $x\in\bbc^d$ with respect to the  Lebesgue measure. 

\section{Moving Average Random Fields}

Throughout this article, all stochastic objects are defined on a fixed complete probability space $(\Omega,\calf,\bbp)$.
\begin{definition}\label{def:MA}
\begin{enumerate}
\item A random field $(Y_t)_{t\in\bbz^d}$ is called \emph{weakly stationary} if $Y_t\in L^2(\Omega,\calf,\bbp)$ for every $t\in\bbz^d$ and $\ga(t):=\cov[Y_0,Y_t]=\cov[Y_{s},Y_{s+t}]$ for every $t,s\in\bbz^d$. It is called a \emph{white noise} if $\ga(0)>0$ and $\ga(t)=0$ for every $0\neq t\in\bbz^d$. In this case, $\si^2:=\ga(0)$ is called the \emph{white noise variance}.
\item Let $q_1,...,q_d$ be positive integers and $(Z_t)_{t\in\bbz^d}$ be a real-valued zero-mean white noise on $\bbz^d$. A random field $(Y_t)_{t\in\bbz^d}$ is called a \emph{moving average random field} if it satisfies the equation
\begin{equation}\label{MAeq}
Y_{t}=\sum_{k\in[0,q]} a_k Z_{t-k},\quad t\in\bbz^d,
\end{equation}
where $a_k\in\bbr$ such that for each $i=1,...,d$ there exist at least two index vectors $l,m\in[0,q]$ satisfying $a_l\neq0$, $a_m\neq0$, $l_i=q_i$ and $m_i=0$.
\end{enumerate}
\end{definition}
The last condition on the two index vectors $l,m$ guarantees that the $MA(q)$ random field has indeed order $q$ and not a smaller order.
We associate to each $MA(q)$ random field the \emph{moving average polynomial}
\begin{equation*}
\theta(x)=\sum_{k\in[0,q]} a_k x^k,
\end{equation*}
and further, we define the formal backshift operators $B_1,...,B_d$ which act on any random field $(X_t)_{t\in\bbz^d}$ in the following way:
\begin{equation*}
B_i X_t=X_{(t_1,...,t_{i-1},t_i -1,t_{i+1},...,t_d)},\quad t\in\bbz^d,\quad i=1,...,d.
\end{equation*}
With this notation, \eqref{MAeq} can be written in short as
\begin{equation*}
Y_t=\theta(B)Z_t,\quad t\in\bbz^d,
\end{equation*}
where $B=(B_1,...,B_d)$. 

The following proposition establishes the link between the moving average polynomial $\theta$ and the autocovariance function $\ga$ of a $MA(q)$ random field.
\begin{proposition}\label{prop1}
Suppose that $(Y_t)_{t\in\bbz^d}$ is a $MA(q)$ random field driven by a white noise $(Z_t)_{t\in\bbz^d}$ with variance $\si^2$. Then $Y$ is weakly stationary, its autocovariance function $\ga$ is compactly supported and we have
\begin{equation}\label{thetagamma}
\si^2\theta(x)\theta(x^{-1})=\sum_{t\in\bbz^d} \ga(t)x^t.
\end{equation}
\end{proposition}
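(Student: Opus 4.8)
The plan is to reduce everything to a single covariance computation, since weak stationarity, compact support, and the generating-function identity all follow from one explicit formula for $\ga(t)$. First I would check the $L^2$ condition: each $Y_t$ is a \emph{finite} linear combination $\sum_{k\in[0,q]} a_k Z_{t-k}$ of white-noise values, and since each $Z_{t-k}\in L^2(\Om,\calf,\bbp)$ by the definition of white noise and $[0,q]$ is a finite index set, $Y_t\in L^2$ as well. This is needed before the covariances are even well defined.

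Next I would compute $\cov[Y_s,Y_{s+t}]$ directly. Expanding both factors and using bilinearity of the covariance gives
\[
\cov[Y_s,Y_{s+t}]=\sum_{k,l\in[0,q]} a_k a_l\,\cov[Z_{s-k},Z_{s+t-l}].
\]
The white-noise property says $\cov[Z_u,Z_v]=\si^2$ when $u=v$ and $0$ otherwise, so only the pairs with $s-k=s+t-l$, i.e.\ $l=k+t$, survive. This leaves
\[
\cov[Y_s,Y_{s+t}]=\si^2\!\!\sum_{\substack{k\in[0,q]\\ k+t\in[0,q]}}\! a_k\,a_{k+t},
\]
which is manifestly independent of $s$. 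That independence is exactly weak stationarity, and it hands us the closed form $\ga(t)=\si^2\sum_{k,\,k+t\in[0,q]} a_k a_{k+t}$ for free.

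Compact support then falls out of the summation range: the index set $\{k\in[0,q]:k+t\in[0,q]\}$ is nonempty only if $-q_i\le t_i\le q_i$ for every $i$, so $\ga(t)=0$ whenever $|t_i|>q_i$ for some $i$, leaving finitely many nonzero values. For the final identity I would expand $\theta(x)\theta(x^{-1})=\sum_{k,l\in[0,q]} a_k a_l\, x^{k-l}$ and collect powers of $x$; the coefficient of $x^t$ is $\sum_{k-l=t} a_k a_l=\sum_{k,\,k-t\in[0,q]} a_k a_{k-t}$. A single reindexing $k\mapsto k+t$ matches this to $\ga(t)/\si^2$, so multiplying through by $\si^2$ yields $\si^2\theta(x)\theta(x^{-1})=\sum_{t\in\bbz^d}\ga(t)x^t$.

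I do not expect a genuine obstacle here: the statement is elementary once the covariance is computed, and the only real care is multi-index bookkeeping---keeping the overlap condition $k,k+t\in[0,q]$ straight and reconciling the two sign conventions ($k+t$ in the probabilistic computation versus $k-l=t$ in the algebraic one) via the reindexing above. It is worth recording explicitly that the zero-mean assumption on $Z$ makes $\cov$ coincide with the pairing we use, and that finiteness of $[0,q]$ is what legitimizes interchanging the finite sum with the covariance.
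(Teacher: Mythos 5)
Your proposal is correct and follows essentially the same route as the paper: the paper's proof likewise expands $\si^2\theta(x)\theta(x^{-1})$, collects the coefficient of $x^t$ as $\si^2\sum_{k,\,k+t\in[0,q]}a_ka_{k+t}$, and identifies it with $\cov[Y_0,Y_t]$ via the white-noise property, while dismissing weak stationarity and compact support as straightforward. You merely run the computation in the opposite direction and spell out the $L^2$, stationarity, and support details that the paper leaves implicit.
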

\begin{proof}
The facts that $Y$ is weakly stationary and $\ga$ is compactly supported are straight-forward.
Let $S=\{ k\in\bbz^d\colon a_k\neq0 \}$ denote the set of indexes with non-vanishing coefficient $a_k$. Then we have that
\begin{align*}
\si^2\theta(x)\theta(x^{-1})&=\si^2\left(\sum_{k\in[0,q]} a_k x^k\right) \left(\sum_{k\in[0,q]} a_k x^{-k}\right)\\
&=\si^2\sum_{t\in\bbz^d} \left( \sum_{k,k+t\in S} a_ka_{k+t} \right) x^t\\
&=\sum_{t\in\bbz^d} \cov\left[\sum_{k\in[0,q]} a_k Z_{0-k},\sum_{k+t\in[0,q]} a_{k+t} Z_{0-k}\right] x^t\\
&=\sum_{t\in\bbz^d} \cov[\theta(B)Z_0,\theta(B)Z_t] x^t\\
&=\sum_{t\in\bbz^d} \cov[Y_0,Y_t] x^t=\sum_{t\in\bbz^d} \ga(t)x^t.
\end{align*}
\end{proof}

\section{Autocovariance Varieties}

We have seen that for $q = (q_1,q_2,\dots,q_d) \in \NN^d$, the autocovariance function of a moving average random field is only dependent on the coefficients $a_k$ of the moving average polynomial $\theta(x)= \sum_{k\in [0,q]} a_k x^k$ and the white noise variance $\si^2$. In order to avoid redundancies in model specification, one can assume without loss of generality that $\si^2=1$ and we will do so for the rest of this paper. There are $Q+1:=\prod_{i=1}^d (q_i+1)$ coefficients $a_k$ and $2N+1:=\prod_{i=1}^d (2q_i+1)$ non-zero autocovariances $\gamma(t)$ for $t \in [-q,q]$. Ordering them in two vectors $a$ and $\ga_a$, we can think of this correspondence as a polynomial map $\Gamma_q :\RR^{Q+1} \mapsto \RR^{2N+1}$ given by $a \mapsto \gamma_{a}$.
	
Since $\gamma(-t)=\gamma(t)$ for all $t\in \ZZ^d$, we can drop half of the autocovariances and only consider $\gamma(t)$ with $t \in [-q,q]$ and $t\succeq0$, where $\succeq$ denotes the lexicographic order. In this way, we have a map $\RR^{Q+1} \mapsto \RR^{N+1}$ which we still denote by $\Gamma_q$. 

The points in the image represent the set of autocovariance functions of moving average random fields. Geometrically, this is a semialgebraic set, defined by polynomial equalities and inequalities. The closure of the image of this parametrization will give a real affine algebraic variety. However, as is standard in algebraic statistics, we will first change the underlying field to be algebraically closed (so the map becomes $\Gamma_q :\CC^{Q+1} \mapsto \CC^{N+1}$ over the complex numbers $\CC$) and then we pass to projective space arriving at $\Gamma_q : \PP^Q \dashedrightarrow  \PP^N$. This last step requires the polynomials $\gamma(t)$ to be homogeneous, and indeed they are in our case.

\begin{definition}\label{def:autovar}
Let $q = (q_1,q_2,\dots,q_d) \in \NN^d$ and define $Q:= \prod_{i=1}^d (q_i+1)-1$ and $N:=  (\prod_{i=1}^d (2q_i+1)-1)/2$. The \textit{autocovariance variety} $\mathcal{MA}_q$ is the image of the autocovariance map 
$\Gamma_q :  \PP^Q \dashedrightarrow \PP^N$. 
\end{definition}

\subsection{Moving average processes}

If $d=1$, moving average random fields are also called moving average processes. These processes are well-studied and belong to the important class of ARMA processes (cf. \cite[Chapter~3]{Brockwell91}). Suppose that $(Y_t)_{t\in\bbz}$ is a $MA(q)$ process given by the equation
\begin{equation*}
Y_{t}=\sum_{k=0}^{q} a_k Z_{t-k},\quad t\in\bbz.
\end{equation*}
Then, the autocovariance function $\ga$ of $Y$ has the simple expression 
\begin{equation*}
\ga(t)=\begin{cases}
\sum_{k=0}^{q-|t|}a_k a_{k+|t|},&\text{if }|t|\leq q,\\
0,&\text{if }|t|> q.
\end{cases}
\end{equation*}

For the class of moving average processes we have that $Q = N = q$. Thus, in this special case the autocovariance map takes the form $$\Gamma_q: \PP^q \dashedrightarrow \PP^q .$$ In the next subsection we will see that the map is actually defined in all of $\PP^q$ (there are no \textit{base points}), so we conclude the following.   
\begin{proposition}\label{prop:d1}
If $d=1$, then $\mathcal{MA}_{q}=\PP^q$.
\end{proposition}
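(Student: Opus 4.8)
The plan is to exploit the spectral factorization already implicit in Proposition~\ref{prop1}. For $d=1$ the map $\Gamma_q$ sends $\theta(x)=\sum_{k=0}^{q} a_k x^k$ to the coefficient vector $(\gamma(0),\dots,\gamma(q))$ of the self-reciprocal Laurent polynomial $\theta(x)\theta(x^{-1})=\sum_{t=-q}^{q}\gamma(t)x^t$. Surjectivity of $\Gamma_q$ onto $\PP^q$ is therefore equivalent to the statement that every prescribed self-reciprocal Laurent polynomial arises from a factorization of this shape, and this is exactly a Fej\'er--Riesz-type factorization problem over $\CC$.

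First I would reduce the question to ordinary polynomials. Given a target $[\gamma(0):\dots:\gamma(q)]\in\PP^q$, form the palindromic polynomial $P(x)=\sum_{t=0}^{2q}\gamma(|t-q|)\,x^t$, obtained by clearing negative powers from $x^q\cdot\sum_t\gamma(t)x^t$. Observing that $x^q\theta(x^{-1})$ is precisely the reversal $\theta^*(x):=x^q\theta(1/x)$, the identity $\theta(x)\theta(x^{-1})=x^{-q}\theta(x)\theta^*(x)$ shows that exhibiting a preimage of the target is the same as producing a factorization $P(x)=\theta(x)\theta^*(x)$ with $\deg\theta,\deg\theta^*\le q$.

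The key step is the factorization itself. Because $P$ is self-reciprocal, its roots in $\CC^*$ come in reciprocal pairs $\{r,1/r\}$. For a generic target $P$ has degree $2q$, has $2q$ distinct nonzero roots, and none of them equals $\pm 1$; choosing one root $r_j$ from each of the $q$ reciprocal pairs and setting $\theta(x)=c\prod_{j=1}^{q}(x-r_j)$ for a suitable constant $c$ gives $\theta^*(x)=c\prod_{j=1}^q(-r_j)(x-1/r_j)$, so that $\theta(x)\theta^*(x)$ has exactly the roots of $P$ and hence agrees with $P$ up to a scalar. Consequently $\theta(x)\theta(x^{-1})=\lambda\sum_t\gamma(t)x^t$ for some $\lambda\in\CC^*$, which in $\PP^q$ is exactly the prescribed point. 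This produces a preimage for every generic point, so $\Gamma_q$ is dominant.

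Finally I would upgrade dominance to full surjectivity by properness. Since $\Gamma_q$ has no base points (as recorded in the paragraph preceding the statement), it is a genuine morphism $\PP^q\to\PP^q$ of projective varieties, so its image is Zariski closed; a closed set containing a dense subset is everything, giving $\mathcal{MA}_q=\PP^q$. The main obstacle I anticipate is not conceptual but bookkeeping: the degenerate targets, where $\gamma(q)=0$ drops the degree of $P$, or where $\pm1$ occurs as a repeated root and forces a self-reciprocal factor, would be awkward to factor by hand. The point of routing through the closure is precisely that these cases need not be treated individually---constructing preimages only over the generic locus already forces the image to be all of $\PP^q$.
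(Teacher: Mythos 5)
Your proof is correct, but it takes a genuinely different route from the paper's. The paper disposes of this proposition abstractly: since $Q=N=q$ and the map has no base points (Lemma~\ref{lem:basepoints}), $\Gamma_q$ is a nonconstant morphism $\PP^q\to\PP^q$ of projective varieties; its image is therefore closed, and because there are no nonconstant regular maps from $\PP^q$ to a variety of smaller dimension, that closed image must be all of $\PP^q$ --- no preimage is ever exhibited. You instead prove dominance constructively, by the Fej\'er--Riesz-type splitting of the reciprocal root pairs of the palindromic polynomial $x^q\sum_t\gamma(t)x^t$, and only then invoke closedness of the image to upgrade to surjectivity. Your factorization step is sound (for generic $\gamma$ the polynomial $P$ has $2q$ distinct nonzero roots in $q$ reciprocal pairs, and any choice of one root per pair yields a valid $\theta$ up to scalar), and your final properness step coincides with the paper's use of Lemma~\ref{lem:basepoints}. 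What your route buys is an explicit inverse and, as a byproduct, the $2^q$ count of choices per fiber --- which is exactly the content of the paper's later Proposition~\ref{prop:ident1}; what the paper's route buys is brevity and uniformity with the dimension argument reused in Theorem~\ref{thm:dimdeg}. One small bookkeeping remark: you should note that a reciprocal pair $\{r,1/r\}$ with $r=1/r$, i.e.\ $r=\pm1$, is excluded generically, which you do; with that, the generic locus you work over is a dense Zariski-open set and the closure argument goes through.
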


While $\mathcal{MA}_{q}$ is not particularly interesting when $d=1$, the parametrization coming from $\Gamma_q: \PP^q \rightarrow \PP^q$ has interesting fibers and computing them is important for statistical applications. This issue of identifiability will be explored in Section~\ref{sec:vier}.

\begin{remark}
Going back for a moment to the real picture (over $\RR$), the equality $\mathcal{MA}_{q}=\PP^q$ is analogous to the statement that when $d=1$, any autocovariance function with support $[-q,q]$ is an autocovariance function of a $MA(q)$ process \cite[Prop 3.2.1]{Brockwell91}.  
\end{remark}

\subsection{Moving average random fields}

We start by carefully analyzing the case $\mathcal{MA}_{(1,1)}$ mentioned in the introduction.  

\begin{theorem}\label{thm:MA11}
The autocovariance variety $\mathcal{MA}_{(1,1)} \subseteq \PP^{4}$ is a threefold of degree 4. In the polynomial ring with variables $g_t = \gamma(t)$, it is the hypersurface defined by the quartic
\begin{equation*}
g_{10}^2 g_{01}^2
    -g_{00} g_{10} g_{01} g_{11}
    +g_{10}^2 g_{11}^2
    +g_{01}^2 g_{11}^2
    -g_{00} g_{10} g_{01} g_{1-1}
    +g_{00}^2 g_{11} g_{1-1}
    -2 g_{10}^2 g_{11} g_{1-1}
    -2 g_{01}^2 g_{11} g_{1-1} 
\end{equation*}
\begin{equation}\label{eq:quartic}
    -4 g_{11}^3 g_{1-1}
    +g_{10}^2 g_{1-1}^2
     +g_{01}^2 g_{1-1}^2
     +8 g_{11}^2 g_{1-1}^2
     -4 g_{11} g_{1-1}^3 = 0.
\end{equation} Its singular locus is a quadratic surface, which is the union of the three irreducible components corresponding to the prime ideals
\begin{equation}\label{help1}
\left\langle g_{10}-g_{01}, g_{00}-2g_{11}-2g_{1-1}, 4g_{11}g_{1-1}-g_{01}^2 \right\rangle,
\end{equation} 
\begin{equation}\label{help2}
\left\langle g_{10}+g_{01}, g_{00}+2g_{11}+2g_{1-1}, 4g_{11}g_{1-1}-g_{01}^2 \right\rangle,
\end{equation} 
and
\begin{equation}\label{help3}
\left\langle g_{11}-g_{1-1}, g_{00}g_{1-1}-g_{10}g_{01} \right\rangle.
\end{equation} 
\end{theorem}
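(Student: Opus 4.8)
The plan is to work directly from the explicit parametrization recorded in Example~\ref{Ex1}. Write $\phi$ for the ring homomorphism
\[
\phi\colon \CC[g_{00},g_{10},g_{01},g_{1-1},g_{11}] \longrightarrow \CC[a_{00},a_{01},a_{10},a_{11}]
\]
sending each variable $g_t$ to the corresponding quadratic form in the $a_k$. The homogeneous vanishing ideal of $\mathcal{MA}_{(1,1)}$ is exactly $\ker\phi$: since all five forms have the same degree $2$, this kernel is homogeneous, and being the kernel of a map into an integral domain it is automatically prime. The first step is to compute $\ker\phi$ by a Gröbner-basis elimination, forming the graph ideal $\langle g_t-\phi(g_t)\rangle\subseteq\CC[a,g]$ and intersecting with $\CC[g]$. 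I expect this kernel to be principal, generated precisely by the quartic $Q$ of \eqref{eq:quartic}; the honest membership check is the direct substitution showing that $Q$ evaluated on the five quadratic forms vanishes identically in the $a_k$, which is a routine (if lengthy) polynomial identity.

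Granting $\ker\phi=\langle Q\rangle$, the dimension and degree claims are immediate. A principal prime ideal in $\CC[g_{00},\dots,g_{11}]$ cuts out an irreducible hypersurface in $\PP^4$, hence a threefold, and its degree equals $\deg Q=4$; moreover principality of the prime ideal forces $Q$ to be irreducible. As an independent confirmation of the dimension I would verify that the Jacobian of the five quadratic forms has generic rank $4$ in the $a$-variables. By the Euler relation for degree-$2$ forms one has $D\phi(a)\cdot a = 2\,\phi(a)$, so the position vector $\phi(a)$ lies in the column space of the Jacobian; the affine image is then a four-dimensional cone and the projective image is a threefold, consistent with $\PP^3$ mapping generically finitely onto $\mathcal{MA}_{(1,1)}$.

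For the singular locus I would exploit that for a homogeneous hypersurface $V(Q)$ the Euler identity $4\,Q=\sum_t g_t\,\pd Q/\pd g_t$ makes the equation $Q=0$ redundant once all partials vanish, so the singular locus is cut out by the Jacobian ideal $\langle \pd Q/\pd g_{00},\dots,\pd Q/\pd g_{11}\rangle$ alone. I would compute this Jacobian ideal, pass to its radical, and determine the minimal primes, aiming to recognize \eqref{help1}, \eqref{help2} and \eqref{help3}. Each listed ideal must then be certified prime and its dimension computed: for \eqref{help1} and \eqref{help2} the two linear generators eliminate $g_{10}$ and $g_{00}$, leaving the conic $4g_{11}g_{1-1}=g_{01}^2$, and the coordinate change $(g_{10},g_{00})\mapsto(-g_{10},-g_{00})$ carries one onto the other, so primality need only be checked once; for \eqref{help3} one checks that $g_{00}g_{1-1}-g_{10}g_{01}$ remains irreducible modulo $g_{11}-g_{1-1}$. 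Taking the union (intersection of ideals) yields the claimed singular locus, cut out by quadratic and linear equations.

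The main obstacle is not conceptual but computational: carrying out the elimination for $\ker\phi$ and, more delicately, computing the primary decomposition of the Jacobian ideal and \emph{certifying} that \eqref{help1}--\eqref{help3} are exactly its minimal primes, with no missing, embedded, or spurious components. These steps are where a computer algebra system such as Macaulay2 or \texttt{Singular} is essential. The subtle point is guaranteeing completeness of the decomposition, i.e.\ that the radical of the Jacobian ideal equals the intersection of the three primes rather than something strictly larger or smaller, and that the dimensions of the three components are recorded correctly.
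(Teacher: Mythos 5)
Your proposal is correct and follows essentially the same route as the paper: the quartic is obtained as the kernel of the parametrization map via Gröbner-basis elimination in a computer algebra system, and the singular locus is found by taking the radical of the Jacobian ideal and computing its prime decomposition. The extra remarks (Euler relation, the symmetry exchanging \eqref{help1} and \eqref{help2}) are sound but do not change the substance of the argument.
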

\begin{proof}
The proof is computational. One way to obtain the quartic \eqref{eq:quartic} is through the following \texttt{Macaulay2} \cite{M2} commands:
\begin{verbatim}
R = QQ[a00,a01,a10,a11]
S = QQ[g00,g01,gm11,g10,g11]
h = map(R,S,{ a00^2  + a10^2  + a01^2  + a11^2,    a00*a01 + a10*a11,
            a10*a01, a00*a10 + a01*a11,  a00*a11} ) 
I = kernel h
\end{verbatim}
For the singular locus, we compute the radical ideal of the quartic along with its vanishing gradient, and then compute its prime decomposition.
\end{proof}

\begin{remark}
Substituting the parametrization of Example~\ref{Ex1} into \eqref{help1} to \eqref{help3}, we find that the three irreducible components of the singular locus correspond to the three conditions
\begin{equation}\label{help4}
a_{10}=a_{01}\, \text{ and } \,a_{00}=a_{11},
\end{equation}
\begin{equation}\label{help5}
a_{10}=-a_{01}\, \text{ and } \,a_{00}=-a_{11},
\end{equation}
and\begin{equation}\label{help6}
a_{00}a_{11}=a_{01}a_{10}.
\end{equation}
These conditions represent submodels and we will analyze Equation~\eqref{help6} in more detail in Example~\ref{ex:ma(1,1)ident}.
\end{remark}

The complexity of $\mathcal{MA}_q$ increases rapidly when $d>1$. It is computationally challenging to obtain generators for its prime ideal even for small values of $q$ and $d$. Beyond $q=(1,1)$, we were also able to do this for $q=(1,2)$ and $q=(1,1,1)$.

\begin{proposition}
The autocovariance variety $\mathcal{MA}_{(1,2)} \subseteq \PP^{7}$ is 5-dimensional of degree 16. Its prime ideal is cut out by 7 sextics. One of those is
\begin{small}
\begin{multline*}
	4g_{12}^3g_{1-2}g_{11}^2-28g_{12}^2g_{1-2}^2g_{11}^2+28g_{12}g_{1-2}^3g_{11}^2-4
	g_{1-2}^4g_{11}^2-4g_{1-2}^2g_{11}^4-g_{12}^2g_{11}^2g_{02}^2+6g_{12}g_{1-2}g_{11}^2g_{02}^2\\
	-5g_{1-2}^2g_{11}^2g_{02}^2-32g_{12}^3g_{1-2}g_{11}g_{1-1}+64g_{12}^2g_{1-2}^2g_{11}g_{1-1}-32g_{12}
	g_{1-2}^3g_{11}g_{1-1}\\
	-8g_{12}g_{1-2}g_{11}^3g_{1-1}+8g_{1-2}^2g_{11}^3g_{1-1}+6g_{12}^2g_{11}g_{02}^
	2g_{1-1}-12g_{12}g_{1-2}g_{11}g_{02}^2g_{1-1}+6g_{1-2}^2g_{11}g_{02}^2g_{1-1}-4g_{12}^4g_{1-1}^2\\
	+28g_{12}^3g_{1-2}g_{1-1}^2-28g_{12}^2g_{1-2}^2g_{1-1}^2+4g_{12}g_{1-2}^3g_{1-1}^2-4g_{12}^2
	g_{11}^2g_{1-1}^2+16g_{12}g_{1-2}g_{11}^2g_{1-1}^2-4g_{1-2}^2g_{11}^2g_{1-1}^2
	\\
	-5g_{12}^2g_{02}^2
	g_{1-1}^2+6g_{12}g_{1-2}g_{02}^2g_{1-1}^2-g_{1-2}^2g_{02}^2g_{1-1}^2+8g_{12}^2g_{11}g_{1-1}^3-8
	g_{12}g_{1-2}g_{11}g_{1-1}^3-4g_{12}^2g_{1-1}^4+8g_{12}^2g_{1-2}g_{11}^2g_{10}
	\\
	-16g_{12}g_{1-2}^2
	g_{11}^2g_{10}+8g_{1-2}^3g_{11}^2g_{10}-2g_{12}g_{11}^2g_{02}^2g_{10}+6g_{1-2}g_{11}^2g_{02}^2g_{10}+
	4g_{12}g_{11}g_{02}^2g_{1-1}g_{10}\\
	+4g_{1-2}g_{11}g_{02}^2g_{1-1}g_{10}+8g_{12}^3g_{1-1}^2g_{10}-16
	g_{12}^2g_{1-2}g_{1-1}^2g_{10}+8g_{12}g_{1-2}^2g_{1-1}^2g_{10}+6g_{12}g_{02}^2g_{1-1}^2g_{10}\\
	-2g_{1-2}g_{02}^2g_{1-1}^2g_{10}
	+4g_{12}g_{1-2}g_{11}^2g_{10}^2-4g_{1-2}^2g_{11}^2g_{10}^2-g_{11}^2
	g_{02}^2g_{10}^2-2g_{11}g_{02}^2g_{1-1}g_{10}^2-4g_{12}^2g_{1-1}^2g_{10}^2\\
	+4g_{12}g_{1-2}g_{1-1}^2
	g_{10}^2-g_{02}^2g_{1-1}^2g_{10}^2+12g_{12}^2g_{1-2}g_{11}g_{02}g_{01}-16g_{12}g_{1-2}^2g_{11}g_{02}
	g_{01}+4g_{1-2}^3g_{11}g_{02}g_{01}\\
	+4g_{1-2}g_{11}^3g_{02}g_{01}-2g_{12}g_{11}g_{02}^3g_{01}+2g_{1-2}
	g_{11}g_{02}^3g_{01}+4g_{12}^3g_{02}g_{1-1}g_{01}\\
	-16g_{12}^2g_{1-2}g_{02}g_{1-1}g_{01}+12g_{12}g_{1-2}^
	2g_{02}g_{1-1}g_{01}+4g_{12}g_{11}^2g_{02}g_{1-1}g_{01}-8g_{1-2}g_{11}^2g_{02}g_{1-1}g_{01}\\
	+2g_{12}
	g_{02}^3g_{1-1}g_{01}-2g_{1-2}g_{02}^3g_{1-1}g_{01}-8g_{12}g_{11}g_{02}g_{1-1}^2g_{01}+4g_{1-2}g_{11}
	g_{02}g_{1-1}^2g_{01}+4g_{12}g_{02}g_{1-1}^3g_{01}\\
	-8g_{1-2}^2g_{11}g_{02}g_{10}g_{01}-2g_{11}g_{02}^3
	g_{10}g_{01}-8g_{12}^2g_{02}g_{1-1}g_{10}g_{01}-2g_{02}^3g_{1-1}g_{10}g_{01}+4g_{1-2}g_{11}g_{02}g_{10}^2
	g_{01}\\
	+4g_{12}g_{02}g_{1-1}g_{10}^2g_{01}-g_{12}^2g_{02}^2g_{01}^2+2g_{12}g_{1-2}g_{02}^2g_{01}^2-
	g_{1-2}^2g_{02}^2g_{01}^2-g_{11}^2g_{02}^2g_{01}^2+2g_{11}g_{02}^2g_{1-1}g_{01}^2-g_{02}^2g_{1-1}^2
	g_{01}^2\\
	+2g_{12}g_{02}^2g_{10}g_{01}^2+2g_{1-2}g_{02}^2g_{10}g_{01}^2-g_{02}^2g_{10}^2g_{01}^2-4
	g_{1-2}^2g_{11}^2g_{02}g_{00}-8g_{12}g_{1-2}g_{11}g_{02}g_{1-1}g_{00}-4g_{12}^2g_{02}g_{1-1}^2g_{00}\\
	-4
	g_{12}^2g_{1-2}g_{11}g_{01}g_{00}+12g_{12}g_{1-2}^2g_{11}g_{01}g_{00}+g_{12}g_{11}g_{02}^2g_{01}g_{00}+
	g_{1-2}g_{11}g_{02}^2g_{01}g_{00}+12g_{12}^2g_{1-2}g_{1-1}g_{01}g_{00}\\
	-4g_{12}g_{1-2}^2g_{1-1}g_{01}
	g_{00}+g_{12}g_{02}^2g_{1-1}g_{01}g_{00}+g_{1-2}g_{02}^2g_{1-1}g_{01}g_{00}-4g_{12}g_{1-2}g_{11}g_{10}g_{01}
	g_{00}+g_{11}g_{02}^2g_{10}g_{01}g_{00}\\
	-4g_{12}g_{1-2}g_{1-1}g_{10}g_{01}g_{00}+g_{02}^2g_{1-1}g_{10}g_{01}
	g_{00}-4g_{12}g_{1-2}g_{02}g_{01}^2g_{00}+g_{1-2}^2g_{11}^2g_{00}^2+2g_{12}g_{1-2}g_{11}g_{1-1}g_{00}^2\\
	+
	g_{12}^2g_{1-1}^2g_{00}^2-g_{1-2}g_{11}g_{02}g_{01}g_{00}^2-g_{12}g_{02}g_{1-1}g_{01}g_{00}^2+g_{12}g_{1-2}
	g_{01}^2g_{00}^2
\end{multline*}
\end{small}
The autocovariance variety $\mathcal{MA}_{(1,1,1)} \subseteq \PP^{13}$ is 7-dimensional of degree 64. Its prime ideal is cut out by 56 quartics, 90 quintics and 50 sextics.
\end{proposition}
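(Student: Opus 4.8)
The plan is to follow the computational strategy already used in the proof of Theorem~\ref{thm:MA11}, now carried out for the two larger configurations $q=(1,2)$ and $q=(1,1,1)$. In each case the assertion is a list of concrete invariants of an explicitly parametrized projective variety, namely its dimension, its degree, and the degree-by-degree tally of a minimal generating set of its prime ideal. All of these can be extracted from a Gr\"obner basis of the vanishing ideal, so the entire proof reduces to setting up the correct ring map and reading off invariants in a system such as \texttt{Macaulay2} \cite{M2}.

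First I would write down the parametrization $\Gamma_q$ explicitly. By Proposition~\ref{prop1}, the nonzero autocovariances are exactly the coefficients of $\theta(x)\theta(x^{-1})$. For $q=(1,2)$, with $\theta(x)=a_{00}+a_{01}x_2+a_{02}x_2^2+a_{10}x_1+a_{11}x_1x_2+a_{12}x_1x_2^2$, this yields the eight homogeneous quadratics $\gamma(t)$ for $t\succeq 0$ in the six variables $a_k$ (matching $N+1=8$ and $Q+1=6$); for $q=(1,1,1)$ one obtains analogously fourteen quadratics in eight variables. I would then form the ring homomorphism $h\colon \CC[g_t]\to\CC[a_k]$ sending each coordinate $g_t$ to its corresponding quadratic expression, exactly as in the $\mathcal{MA}_{(1,1)}$ code, where $g_t=\gamma(t)$.

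Next, $I(\mathcal{MA}_q)=\ker h$ is the homogeneous prime ideal of the autocovariance variety, being the vanishing ideal of the closure of the image of an irreducible variety under $h$; the \texttt{kernel} routine computes it directly via Gr\"obner bases, so the base points of the rational map $\Gamma_q\colon\PP^Q\dashedrightarrow\PP^N$ need not be treated separately. From a Gr\"obner basis of $\ker h$ I would read the Hilbert polynomial, giving dimension $5$ and degree $16$ for $q=(1,2)$, and dimension $7$ and degree $64$ for $q=(1,1,1)$; a minimal generating set then sorts by degree into the claimed counts ($7$ sextics in the first case; $56$ quartics, $90$ quintics and $50$ sextics in the second). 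As a consistency check, the computed dimensions equal $Q$ in both cases, in agreement with the general formula of Theorem~\ref{thm:dimdeg}, reflecting that $\Gamma_q$ has finite generic fibers.

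The main obstacle is purely computational: for $q=(1,1,1)$ the prime ideal lives in $14$ variables and has $196$ minimal generators, so a direct kernel computation over $\QQ$ is prone to severe coefficient and intermediate-basis blowup. To make it feasible I would first run the computation over a large prime field $\bbf_p$ to suppress coefficient swell and to pin down the degrees and generator counts, exploit the symmetry of $\theta(x)\theta(x^{-1})$ under the sign flips and coordinate permutations that fix the product, and cross-validate the dimension and degree by the numerical monodromy and witness-set techniques referenced in the introduction before certifying the final minimal generators over $\QQ$.
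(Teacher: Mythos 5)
Your proposal matches the paper's (implicit) proof exactly: the paper treats this proposition as a computational result obtained by the same \texttt{Macaulay2} kernel computation used for Theorem~\ref{thm:MA11}, extended to $q=(1,2)$ and $q=(1,1,1)$, with dimension, degree, and generator counts read off from the resulting Gr\"obner basis. Your bookkeeping of the ambient dimensions ($N+1=8$ quadrics in $6$ parameters, resp.\ $14$ quadrics in $8$ parameters) and the consistency checks against Theorem~\ref{thm:dimdeg} are all correct, and the practical remarks about finite-field and numerical cross-validation are sensible additions rather than deviations.
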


Table \ref{table1} presents the basic properties of the first autocovariance varieties $\mathcal{MA}_{(q_1,q_2)}$, that is, for $d=2$. The dimension appears to be the expected one, while the degree follows a clear pattern as a power of two. We will prove that this actually holds for any $\mathcal{MA}_q$. To that end, we use the next two lemmas. 
\begin{table}[ht]
\centering
\begin{tabular}{cccccc}
\hline
$q_1$ & $q_2$ & $\dim (\mathcal{MA}_q)$ & $N$  & $\deg (\mathcal{MA}_q)$ & generators \\ 
\hline
1 & 1 & 3 & 4 & 4 & 1 quartic \\ 
1 & 2 & 5 & 7 & 16 & 7 sextics \\ 
1 & 3 & 7 & 10 & 64 & ? \\
2 & 2 & 8 & 12 & 128 & ? \\
\hline
\end{tabular}
\caption{Summary of first autocovariance varieties for $d=2$}\label{table1}
\end{table}

\begin{lemma}\label{lem:basepoints}
The map $\Gamma_q: \PP^Q \mathrel{-\,}\dashedrightarrow \PP^N$ has no base points.
\end{lemma}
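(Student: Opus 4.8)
The plan is to reduce the statement to a single algebraic identity and then exploit that the ring of Laurent polynomials is an integral domain. Recall that $a\in\PP^Q$ is a base point of $\Gamma_q$ precisely when all of the homogeneous coordinate polynomials $\ga(t)$ defining the map vanish simultaneously at $a$. Since $\ga(-t)=\ga(t)$, requiring $\ga(t)=0$ for every $t\succeq 0$ in $[-q,q]$ is the same as requiring $\ga(t)=0$ for \emph{all} $t\in[-q,q]$. Thus I would show that the only $a\in\CC^{Q+1}$ at which all the $\ga(t)$ vanish is $a=0$; this says the base locus is empty in $\PP^Q$, which is the claim.

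The key observation is the identity of Proposition~\ref{prop1}. With $\si^2=1$ it holds as a formal identity in the ring $\CC[a_k][x_1^{\pm 1},\dots,x_d^{\pm 1}]$, namely
\[
\theta(x)\,\theta(x^{-1})=\sum_{t\in\bbz^d}\ga(t)\,x^t .
\]
Therefore, if a putative base point $a$ makes every coefficient $\ga(t)$ vanish, the right-hand side is the zero Laurent polynomial, so that $\theta(x)\,\theta(x^{-1})=0$ in $\CC[x_1^{\pm 1},\dots,x_d^{\pm 1}]$.

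Now I would invoke that $\CC[x_1^{\pm 1},\dots,x_d^{\pm 1}]$ is an integral domain, being a localization of the polynomial ring $\CC[x_1,\dots,x_d]$. Hence $\theta(x)\,\theta(x^{-1})=0$ forces $\theta(x)=0$ or $\theta(x^{-1})=0$. Since the substitution $x_i\mapsto x_i^{-1}$ is a ring automorphism, these two conditions are equivalent, and either way $\theta(x)=0$. Reading off coefficients gives $a_k=0$ for all $k\in[0,q]$, i.e.\ $a=0$, which completes the argument.

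The only point to watch for is that one cannot take the naive shortcut that is available over $\RR$: there, the single equation $\ga(0)=\sum_k a_k^2=0$ already forces $a=0$, but over the algebraically closed field $\CC$ this sum of squares vanishes on nonzero isotropic vectors, so $\ga(0)$ alone is insufficient. This is exactly why the full system together with the integral-domain argument is needed, and it is the only subtle step; everything else is immediate from Proposition~\ref{prop1}.
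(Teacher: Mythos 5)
Your proof is correct and follows essentially the same route as the paper's: both use the identity $\theta(x)\theta(x^{-1})=\sum_t\ga(t)x^t$ from Proposition~\ref{prop1} and an integral-domain argument to conclude $\theta=0$, hence $a=0$. The only cosmetic difference is that the paper clears denominators by multiplying by $x^q$ to work in $K[x]$, whereas you work directly in the Laurent polynomial ring; the substance is identical.
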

\begin{proof}
Assume that $\Gamma_q(a)=0$. We know from \eqref{thetagamma} that $$\theta(x)\theta(x^{-1})=\sum_{t\in\bbz^d} \ga(t)x^t = 0. $$ Multiplying both sides by the monomial $x^q = x_1^{q_1}\cdots x_d^{q_d}$, we obtain the product of two polynomials $\theta(x) \cdot x^q\theta(x^{-1})$ that equals the zero polynomial. Since the polynomial ring $K[x]$ is an integral domain when $K$ is a field, we must have that either $\theta(x)=0$ or $x^q\theta(x^{-1})=0$. In particular, all the coefficients $a_k=0$, that is, $a=0$ is the zero vector.
\end{proof}

\begin{lemma}\label{lem:veronese}
The autocovariance variety $\mathcal{MA}_q$ is a linear projection of the Veronese variety.
Furthermore, $\ga(t)$ is the sum of exactly $(q_1-|t_1|+1)\cdots(q_d-|t_d|+1)$ quadratic monomials for every $t\in[-q,q]$ with $t\succeq 0$, and each monomial appears exactly once. 
\end{lemma}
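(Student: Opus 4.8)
The plan is to read off both assertions directly from the factorization \eqref{thetagamma}, which after setting $\si^2=1$ expresses every autocovariance as an explicit quadratic form in the coefficients $a_k$. First I would expand the product $\theta(x)\theta(x^{-1})=\sum_{k,l\in[0,q]} a_k a_l\, x^{k-l}$ and collect the coefficient of $x^t$. Writing $l=k-t$, this yields
\begin{equation*}
\ga(t)=\sum_{\substack{k\in[0,q]\\ k-t\in[0,q]}} a_k\, a_{k-t},
\end{equation*}
so each $\ga(t)$ is a sum of quadratic monomials all of whose coefficients equal $1$. The index $k$ ranges over those vectors with $\max(0,t_i)\le k_i\le \min(q_i,q_i+t_i)$ for every $i$, that is, over a box admitting exactly $q_i-|t_i|+1$ values in each coordinate (since $\max(0,t_i)-\min(0,t_i)=|t_i|$). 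Multiplying these counts gives the claimed total of $(q_1-|t_1|+1)\cdots(q_d-|t_d|+1)$ terms.

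The only point that genuinely needs care is the claim that \emph{each monomial appears exactly once}, i.e.\ that the coefficient of every $a_k a_{k-t}$ in the displayed sum is exactly $1$ rather than a larger integer. I would establish this by showing that $k\mapsto a_k a_{k-t}$ is injective on the admissible index set whenever $t\ne 0$: two indices $k\ne k'$ produce the same unordered monomial only in the cross case $k=k'-t$ and $k-t=k'$, which forces $k+t=k-t$ and hence $t=0$. Since we only consider $t\succeq 0$, the remaining value $t=0$ contributes the squares $a_k^2$, each again arising from a single $k$; thus in every case distinct admissible indices yield distinct monomials with coefficient $1$.

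Finally, to exhibit $\mathcal{MA}_q$ as a linear projection of the Veronese variety, I would factor the parametrization through the second Veronese embedding $\nu_2\colon \PP^Q\to \PP^{\binom{Q+2}{2}-1}$, $a\mapsto (a_ia_j)_{i\le j}$, whose image is the Veronese variety $V$. By the formula above each coordinate $\ga(t)$ of $\Gamma_q$ is a linear form in the quadratic monomials $a_ia_j$, so $\Gamma_q=\pi\circ\nu_2$ for the linear projection $\pi$ sending the Veronese coordinates to these $N+1$ forms; hence $\mathcal{MA}_q=\pi(V)$. I would also invoke Lemma~\ref{lem:basepoints} to note that the forms $\ga(t)$ have no common zero on $V$, so that $\pi$ restricts to an honest morphism defined everywhere on $V$. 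The main obstacle here is not conceptual but organizational: one must keep the multi-index constraints straight to get the count right and, above all, verify the no-collision claim, which is the one place where a careless argument could overcount the coefficient.
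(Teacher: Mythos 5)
Your proof is correct and follows essentially the same route as the paper's: it derives $\ga(t)=\sum_{k,\,k-t\in[0,q]}a_k a_{k-t}$ from Proposition~\ref{prop1} and obtains the Veronese factorization from the observation that each coordinate of $\Gamma_q$ is a sum of distinct quadratic monomials. You fill in details the paper leaves implicit --- the box count $\prod_{i=1}^d(q_i-|t_i|+1)$ and the no-collision check --- but the underlying argument is the same.
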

\begin{proof}
The quadratic Veronese embedding precisely consists of all quadratic monomials.
The parametrization of $\mathcal{MA}_q$ consists of quadrics, each one is a sum of quadratic monomials.
Moreover, Proposition~\ref{prop1} implies that
\begin{equation}
\ga(t)=\sum_{k,k+t\in [0,q]}a_k a_{k+t},
\end{equation} 
for every $t\in[-q,q]$, which shows the second part of the assertions.
\end{proof}

Now we state the main theorem concerning our varieties $\mathcal{MA}_q$.
\begin{theorem}\label{thm:dimdeg}
Let $q\in\bbn^d$. Then
$$\dim (\mathcal{MA}_q) = Q = \prod_{i=1}^d (q_i+1)-1 $$
and if $d>1$, then 
$$\deg (\mathcal{MA}_q)=2^{Q-1} = 2^{\prod_{i=1}^d (q_i+1)-2}.$$
\end{theorem}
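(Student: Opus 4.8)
The plan is to realize $\Gamma_q$ as a linear projection of the quadratic Veronese variety and to transport the Veronese's invariants through this projection. By Lemma~\ref{lem:veronese} each coordinate $\ga(t)$ is a sum of distinct quadratic monomials in the $a_k$, so $\Gamma_q$ factors as $\Gamma_q = \pi\circ v_2$, where $v_2\colon \PP^Q\to \PP^M$ (with $M=\binom{Q+2}{2}-1$) is the quadratic Veronese embedding and $\pi$ is a linear projection sending the monomial $a_ka_{k+t}$ into the coordinate $\ga(t)$. The image $v_2(\PP^Q)$ has dimension $Q$ and degree $2^Q$. Crucially, Lemma~\ref{lem:basepoints} says $\Gamma_q$ has no base points, which is precisely the statement that the center of $\pi$ is disjoint from $v_2(\PP^Q)$, so $\pi$ restricts to an everywhere-defined morphism on the Veronese and its fibers coincide with those of $\Gamma_q$ (as $v_2$ is an isomorphism onto its image).

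Both assertions then reduce to understanding the generic fiber of this projection. If the fiber is finite, a generically finite projection preserves dimension, giving $\dim\mathcal{MA}_q=\dim v_2(\PP^Q)=Q$; since finiteness will hold for every $d\ge 1$, this yields the dimension statement in full generality. For the degree I would invoke the standard formula for a generically finite linear projection whose center is disjoint from $X$, namely $\deg X=\delta\cdot\deg\pi(X)$, where $\delta=[\CC(X):\CC(\pi(X))]$ is the cardinality of the generic fiber. Applied to $X=v_2(\PP^Q)$ this reads $2^Q=\delta\cdot\deg\mathcal{MA}_q$, so the theorem follows once I show $\delta=2$ when $d>1$.

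Computing $\delta$ is the heart of the matter and the step I expect to be the main obstacle. By Proposition~\ref{prop1}, two coefficient vectors give the same point of $\mathcal{MA}_q$ exactly when $\theta_a(x)\theta_a(x^{-1})=\theta_b(x)\theta_b(x^{-1})$ in the Laurent ring $\CC[x_1^{\pm1},\dots,x_d^{\pm1}]$ (the projective scalar is absorbed by rescaling $b$). The key input is that for $d>1$ a generic polynomial supported on the box $[0,q]$ is irreducible: any factorization forces a Minkowski decomposition of the Newton box, and a dimension count shows each such locus of reducible polynomials is a proper subvariety — for $q=(1,1)$ this reducible locus is exactly the submodel $a_{00}a_{11}=a_{01}a_{10}$ of Equation~\eqref{help6}. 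Granting irreducibility of $\theta_a$ and of its reflection, unique factorization (whose units are the monomials $c\,x^m$) forces $\theta_b$ to be a unit multiple of either $\theta_a(x)$ or $\theta_a(x^{-1})$; the support constraint $[0,q]$ pins the monomial ($m=0$, respectively $m=q$) and matching the products fixes the scalar up to sign.

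Projectively this leaves exactly the two points $[\theta_a]$ and $[\,x^q\theta_a(x^{-1})]$, i.e. the coefficients $a_k$ and their reversal $a_k\mapsto a_{q-k}$, which are generically distinct. Hence $\delta=2$ and $\deg\mathcal{MA}_q=2^Q/2=2^{Q-1}$. The same factorization analysis shows the fiber is finite, completing the dimension claim as well. Finally, this argument transparently explains the hypothesis $d>1$: when $d=1$ the polynomial $\theta_a$ splits into $q$ linear factors, and the freedom of assigning each reciprocal root pair to $\theta$ inflates the fiber to $\delta=2^q$, which recovers $\deg\mathcal{MA}_q=2^Q/2^q=1$ in agreement with Proposition~\ref{prop:d1}.
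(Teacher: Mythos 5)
Your proposal is correct and follows essentially the same route as the paper: factor $\Gamma_q$ through the quadratic Veronese via Lemmas~\ref{lem:basepoints} and~\ref{lem:veronese}, apply the degree formula $2^Q=\delta\cdot\deg(\mathcal{MA}_q)$ for the resulting generically finite projection, and show $\delta=2$ when $d>1$. The only differences are cosmetic: you inline the irreducibility/unique-factorization argument that the paper delegates to Theorem~\ref{thm:ident}, and you obtain the dimension from finiteness of the generic fiber rather than from the fact that a nonconstant morphism out of $\PP^Q$ cannot map into a lower-dimensional variety.
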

\begin{proof}
Let $D:=\dim(\mathcal{MA}_q)$ denote the dimension of $\mathcal{MA}_q$ and consider the regular map $\Gamma_q: \PP^Q \longrightarrow \PP^N$. Since the domain is $Q$-dimensional, the inequality $D \leq Q$ has to hold. However, $\Gamma_q$ is not a constant map and has no base points by Lemma~\ref{lem:basepoints}. As there are no nonconstant regular maps from a projective space to a variety of smaller dimension, we must have equality, that is, $D = Q$. 

Now consider the degree of the map $\Gamma_q$:
 $$\deg(\Gamma_q) = \deg(\mathcal{MA}_q) \deg(\Gamma^{-1}(\gamma))$$
where $\gamma \in\mathcal{MA}_q $ is generic. By Lemmas \ref{lem:basepoints} and \ref{lem:veronese}, the left hand side equals the degree of the quadratic Veronese variety $\mathcal{V}_{Q,2}$, which is $2^Q$. In addition, the identifiability Theorem~\ref{thm:ident} below proves that $\deg(\Gamma^{-1}(\gamma)) = 2$. Hence, we conclude that $\deg(\mathcal{MA}_q) = 2^{Q-1}$.
\end{proof}

\section{Identifiability} \label{sec:vier}

We show that all models $MA(q)$ with $q \in \NN^d$ are algebraically identifiable (in the sense of \cite{Amendola18}). This means that the map from the model parameters to the autocovariances is generically finite to one. 

\subsection{Moving Average Processes}

The following result is the projective version of the known result in the moving average process literature \cite{Brockwell91}.

\begin{proposition}\label{prop:ident1}
If $d=1$, the fibers of a generic point $\gamma\in \mathcal{MA}_{q}$ consist of $2^q$ points. 
\end{proposition}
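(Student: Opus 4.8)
The plan is to reduce the problem to the classical theory of moving average processes by working with the moving average polynomial and its factorization over $\CC$. Recall from Proposition~\ref{prop1} that the autocovariance generating function factors as $\theta(x)\theta(x^{-1}) = \sum_{t} \ga(t) x^t$. In the one-dimensional case, $\theta$ is a genuine univariate polynomial of degree $q$ (with $a_0, a_q \neq 0$), and the key insight is that specifying the autocovariance function $\ga$ is equivalent to specifying the Laurent polynomial $\theta(x)\theta(x^{-1})$ up to the ambiguity inherent in this symmetric product. I would therefore first homogenize the situation: a generic $\gamma\in\mathcal{MA}_q$ corresponds to a $\theta$ whose $q$ complex roots $r_1,\dots,r_q$ are distinct and none lies on the unit circle or equals zero, so the associated degree-$2q$ symmetric Laurent polynomial has $2q$ simple roots coming in reciprocal pairs $\{r_j, r_j^{-1}\}$.

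The main combinatorial step is to count the factorizations. First I would observe that $\theta(x)\theta(x^{-1})$ determines, and is determined by, the unordered set of reciprocal root-pairs $\{r_j, r_j^{-1}\}_{j=1}^q$ together with an overall constant. Any preimage $\theta'$ giving the same autocovariance must satisfy $\theta'(x)\theta'(x^{-1}) = c\,\theta(x)\theta(x^{-1})$; since $\theta'$ has degree $q$, its root set must be obtained by choosing, for each reciprocal pair, exactly one of the two roots $r_j$ or $r_j^{-1}$ to be a root of $\theta'$. This gives $2^q$ choices of root sets, and each choice determines $\theta'$ up to the leading constant, which is pinned down projectively (or by the normalization $\si^2=1$). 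For a generic point all these $2^q$ polynomials are genuinely distinct because the roots are distinct and off the unit circle, so no two choices coincide. Hence the fiber consists of exactly $2^q$ points, matching the claim.

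The step I expect to be the main obstacle is verifying that the generic fiber has exactly $2^q$ points and not fewer, i.e. that the $2^q$ root-selections really yield distinct points of $\mathcal{MA}_q$ and that none of them are identified under the passage to projective space. Concretely, I must rule out coincidences such as $r_j = r_j^{-1}$ (which forces $r_j = \pm 1$, a non-generic degeneration putting a root on the unit circle) and must ensure that swapping roots within different pairs cannot accidentally produce the same polynomial; both are codimension-$\geq 1$ conditions and so fail only on a proper subvariety. I would make this precise by arguing that the locus of $\theta$ with a repeated root, a root on the unit circle, or a vanishing leading or trailing coefficient is Zariski-closed and proper, so its complement is a generic (dense) set on which the counting argument is valid.

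Finally I would note that this generic count is consistent with the degree computation: the parametrization map $\Gamma_q:\PP^q \to \PP^q$ is dominant by Proposition~\ref{prop:d1}, and a generically $2^q$-to-one map from $\PP^q$ to itself has topological degree $2^q$, which agrees with the fact that the underlying Veronese-type quadratic parametrization has degree $2^q$ as used in the proof of Theorem~\ref{thm:dimdeg}. This cross-check serves as a useful sanity verification of the fiber cardinality.
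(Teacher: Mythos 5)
Your proof is correct and takes essentially the same route as the paper: the paper also factors $\theta(x)=a_q(x-\al_1)\cdots(x-\al_q)$ over $\CC$ and counts the $2^q$ projective points obtained by independently replacing each factor $(x-\al_i)$ with $(\al_i x-1)$, i.e.\ selecting one root from each reciprocal pair. Your additional care in spelling out the genericity conditions (simple roots, no self-reciprocal roots, nonvanishing extreme coefficients) only makes explicit what the paper's phrase ``in general'' leaves implicit.
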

\begin{proof}
Let $\al_1,...,\al_q\in\bbc$ be the $q$ roots of the moving average polynomial
\begin{equation*}
\theta(x)=\sum_{k=0}^{q} a_k x^k=a_q(x-\al_1)\cdots(x-\al_q),\quad x\in\bbc.
\end{equation*}
Using Proposition~\ref{prop1}, we see that there are exactly $2^{q+1}$ polynomials which generate $\ga$ as above, all of which have the form
\begin{equation*}
\pm a_q(x-\al_1)_{\pm}\cdots(x-\al_q)_{\pm},
\end{equation*}
where
\begin{equation}\label{flip}
(x-\al_i)_{+}:=(x-\al_i)\quad\text{and}\quad(x-\al_i)_{-}:=(\al_i x-1).
\end{equation}\label{eq:rev}
Hence, the fiber of any point in $\mathcal{MA}_{q}$ under $\Gamma_q: \PP^q \rightarrow \PP^q$ consists in general of $2^{q}$ points.
\end{proof}

Proposition~\ref{prop:ident1} has two consequences. First, it implies that the map $\Gamma_q$ is not injective and the moving average parameters $a_i$ are not identifiable from a second order point of view if $d=1$. Second, it is possible to deduce all preimage points from a single one by inverting the roots of $\theta$ as suggested in \eqref{flip}.

In order to obtain injectivity of $\Gamma_q$, one usually imposes the condition that all roots $\alpha_i$ of the polynomial $\theta$ lie strictly outside the unit disk (and $a_0>0$). This property is also called \emph{invertibility} since it holds if and only if there exists coefficients $\pi_0,\pi_1,...$ with $\sum_{k=0}^\infty |\pi_k|<\infty$ such that the white noise sequence $(Z_t)_{t\in\bbz}$ can be expressed as
\begin{equation*}
Z_{t}=\sum_{k=0}^{\infty} \pi_k Y_{t-k},\quad t\in\bbz.
\end{equation*}

\begin{example}
Let $q=1$. This is the simplest moving average model $MA(1)$. We have that $\theta(x)= a_0 + a_1 x$ and $\Gamma_1: \PP^1 \longrightarrow \PP^1 $ is given by $$\Gamma_1(a_0,a_1) = (a_0^2 + a_1^2 , a_0a_1). $$
The fiber of a generic point $\gamma = (\gamma_0,\gamma_1)$ consists of $2=2^1$ points in $\PP^1$. They are $(\tilde{a}_0, \tilde{a}_1)$ and $(\tilde{a}_1, \tilde{a}_0)$ where
\begin{equation}
\tilde{a}_0 = \sqrt{\frac{ \ga_{0}+\sqrt{ \ga_{0}^2-4  \ga_{1}^2}}{2}},\qquad
\tilde{a}_1 = \sqrt{\frac{ \ga_{0}-\sqrt{ \ga_{0}^2-4  \ga_{1}^2}}{2}}.    
\end{equation}
The invertibility condition is equivalent to $|a_0|>|a_1|$. 
\end{example}

The observed symmetry of the two points $(\tilde{a}_0, \tilde{a}_1)$ and $(\tilde{a}_1, \tilde{a}_0)$ above extends to higher $q$. In fact, it holds that
\begin{equation}\label{sym}
\Gamma_q(a_0,a_1,\dots,a_{q-1},a_q) = \Gamma_q(a_q,a_{q-1},\dots,a_1,a_0).
\end{equation}
This can be seen from \eqref{thetagamma}, where the reversal occurs by inverting all the roots in \eqref{eq:rev}.  

For general $q>1$, there exist algorithms to numerically approximate the invertible solution with $a_0=1$. A basic one is the \emph{ innovations algorithm}, which recursively converges to the moving average parameters $a_k$ given the autocovariance values $\ga(t)$ under the invertibility condition (we refer to Section~2 in \cite{Brockwell88} for details). Other approaches use spectral factorization methods  \cite{sayed2001survey}. While we do not pursue this in this paper, the fact remains that the desired parameters are solutions to a polynomial system of equations, so it would be interesting to compare these with state-of-the-art algorithms in numerical algebraic geometry. See Example \ref{ex:simstudy} for an illustration of such techniques.

Furthermore, the symmetry in the polynomial system means that one does not necessarily need to find a root of a polynomial of degree $2^q$ even when there are $2^q$ solutions. We illustrate this with $q=2$.

\begin{example}
For $q=2$ we have $\theta(x)= a_0 + a_1 x + a_2 x^2$ and $\Gamma_2: \PP^2 \longrightarrow \PP^2 $ given by $$\Gamma_2(a_0,a_1,a_2) = (a_0^2 + a_1^2 + a_2^2, a_0a_1+ a_1a_2, a_0 a_2). $$
The fiber of a generic point $\gamma = (\gamma_0,\gamma_1,\ga_2)\in\mathcal{MA}_{2}$ consists of $2^2=4$ points. A Gr\" obner basis elimination from the system $\Gamma_2(a_0,a_1,a_2)= (\gamma_0, \gamma_1, \gamma_2)$  with order $a_2,a_0,a_1$  reveals a triangular system with a quadric in $a_1^2$:
\begin{align*}
a_1^4 - (\ga_0 + 2\ga_2) a_1^2 + \ga_1^2 &= 0 \\
a_0^2 a_1 - \ga_1a_0 + a_1\ga_2&= 0 \\
a_2 a_1 + a_0 a_1 - \ga_1 &= 0.
\end{align*}
And hence the solutions for $(a_0,a_1,a_2)$ in terms of $(\gamma_0,\gamma_1,\gamma_2)$ can be obtained as
$$
a_1= \sqrt{\frac{\ga_0+2\ga_2\pm\sqrt{(\ga_0+2\ga_2)^2-4\ga_1^2}}{2}} , \qquad
a_0= \sqrt{\frac{\ga_1 \pm \sqrt{\ga_1^2 - 4a_1^2 \ga_2}}{2a_1}} , \qquad
a_2= \frac{\ga_1 - a_0 a_1}{a_1}.
$$

\end{example}

\subsection{Moving Average Random Fields}

The following result demonstrates a fundamental difference between $d=1$ and $d>1$ in terms of identifiability. On the other hand, it shows how the symmetry in \eqref{sym} generalizes to higher dimensions.

\begin{theorem}\label{thm:ident}
Suppose that the moving average polynomial $\theta$ is generic. Then for $d>1$, the fibers of a point $\gamma \in \mathcal{MA}_{q}$ are only two points $a$ and $a'$ in $\PP^Q$. One is obtained from the other by $a'_k = a_{q-k}$ for any $k \in [0,q]$.
\end{theorem}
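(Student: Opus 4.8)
The key tool is Proposition~\ref{prop1}, which tells us that the autocovariance function $\ga$ of an $MA(q)$ random field is determined by the product $\theta(x)\theta(x^{-1})$. The plan is to phrase the fiber problem as a factorization problem: recovering $\theta$ from $\gamma$ amounts to factoring the Laurent polynomial $\Theta(x) := \theta(x)\theta(x^{-1}) = \sum_{t} \ga(t) x^t$ into a product of a polynomial $\theta(x)$ supported on $[0,q]$ and its reversal $x^q\theta(x^{-1})$ (up to the clearing of denominators). First I would homogenize by multiplying through by $x^q$, so that $x^q\Theta(x) = \bigl(\theta(x)\bigr)\cdot\bigl(x^q\theta(x^{-1})\bigr)$ is an honest polynomial factorization in the ring $\CC[x_1,\dots,x_d]$. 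The second factor, $\widetilde\theta(x) := x^q\theta(x^{-1}) = \sum_{k\in[0,q]} a_k x^{q-k}$, is exactly the polynomial whose coefficient vector is the reversal $a_k \mapsto a_{q-k}$ appearing in the statement; this already exhibits the two points $a$ and $a'$ of the theorem, and the symmetry \eqref{sym} confirms $\Gamma_q(a)=\Gamma_q(a')$. So the content of the theorem is that, generically, these are the \emph{only} two preimages.

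To show there are no others, I would argue via \emph{unique factorization} in $\CC[x_1,\dots,x_d]$, which is a UFD for any $d$. Any preimage $b$ of $\gamma$ gives a factorization $x^q\Theta(x) = \theta_b(x)\cdot\widetilde{\theta_b}(x)$. For generic $\theta$, the polynomial $\theta(x)$ is irreducible (this is the crucial genericity input — see below), and then so is its reversal $\widetilde\theta$. Thus $x^q\Theta = \theta\cdot\widetilde\theta$ is a factorization into two irreducibles, and by unique factorization the only factorizations of $x^q\Theta$ into two factors of the prescribed supports/degrees are, up to scalars, $\{\theta,\widetilde\theta\}$ and $\{\widetilde\theta,\theta\}$. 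Since we work projectively in $\PP^Q$, scalar ambiguity is quotiented out, and the two unordered choices collapse to the two points $a$ and $a'$. I would also need to check that $\theta$ and $\widetilde\theta$ are not associates generically (otherwise the fiber would be a single point), which fails only on the proper subvariety where $a$ is palindromic up to scalar, i.e. $a_k = \lambda a_{q-k}$; this is exactly the kind of submodel seen in \eqref{help4}--\eqref{help5} for the $(1,1)$ case.

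The main obstacle is establishing the genericity claim that $\theta(x)$ is irreducible for generic coefficients $a$ \emph{when $d>1$}, and pinning down why this fails for $d=1$. This is precisely the point where dimension matters: for $d=1$ the polynomial $\theta(x)=\sum_{k=0}^q a_k x^k$ always factors into linear factors over $\CC$, giving the $2^q$ preimages of Proposition~\ref{prop:ident1} by independently inverting each root as in \eqref{flip}. For $d>1$, by contrast, a generic polynomial in several variables of a fixed support is irreducible — multivariate polynomials do not factor generically — so no such root-by-root flipping is available and the factorization $\theta\cdot\widetilde\theta$ is essentially rigid. I would justify the generic irreducibility by exhibiting a single irreducible $\theta$ of the given support (irreducibility is a Zariski-open condition on the coefficient space, so one witness suffices), for instance using an Eisenstein-type or Newton-polytope argument: if the Newton polytope of $\theta$ is indecomposable as a Minkowski sum, then $\theta$ cannot factor nontrivially, and the box $[0,q]$ supplies such a polytope. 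The remaining steps — verifying that the two exhibited points genuinely lie in the fiber, that they are distinct on a dense open set, and that clearing the $x^q$ denominator introduces no spurious factorizations — are routine once irreducibility is in hand.
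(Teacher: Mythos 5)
Your proposal follows essentially the same route as the paper: Proposition~\ref{prop1} turns the fiber question into the polynomial identity $\theta(x)\,\bigl(x^q\theta(x^{-1})\bigr)=\theta'(x)\,\bigl(x^q\theta'(x^{-1})\bigr)$, and generic irreducibility of $\theta$ together with unique factorization in $\CC[x_1,\dots,x_d]$ forces $\theta'$ to be an associate of $\theta$ or of its reversal. You actually supply more detail than the paper's three-line argument, which simply asserts ``since generically $\theta$ is irreducible''; your points about excluding the trivial factorizations by support, about the non-associate (non-palindromic) condition, and about why the mechanism breaks for $d=1$ are all correct and worth making explicit.

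One concrete correction: your proposed Newton-polytope justification of generic irreducibility does not work as stated, because the box $[0,q_1]\times\cdots\times[0,q_d]$ is \emph{not} Minkowski-indecomposable --- it is precisely the Minkowski sum of the $d$ segments from $0$ to $q_ie_i$, which is what makes reducible polynomials such as $(1+x_1)(1+x_2)$ with full box support possible in the first place. The indecomposability criterion therefore gives nothing here. Your other suggested justification is the right one: the locus of reducible polynomials with support in $[0,q]$ is a proper Zariski-closed subset of $\PP^Q$ (a finite union of images of the multiplication maps $\PP^{a}\times\PP^{b}\to\PP^Q$, which are projective and hence have closed image), so a single irreducible witness with full box support suffices; for instance $1+x_1+x_2+2x_1x_2$ works for $q=(1,1)$, since any factorization would have to respect the only nontrivial Minkowski decomposition of the square into two segments and the resulting $2\times 2$ coefficient conditions are inconsistent. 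Analogous witnesses are easy to produce for general $q$ and $d>1$, which closes the one genuine gap in your argument.
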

\begin{proof}
Let $\ga$ be the image of the coefficients $a_k$ of a moving average polynomial $\theta$ under the mapping $\Ga_q$ and assume that $\theta'$ is another polynomial which also generates $\ga$ and has coefficients $a'_k$. Due to Proposition~\ref{prop1}, the polynomial equation
\begin{equation}
\theta(x)  (x^q\theta(x^{-1}))=\theta'(x)  (x^q\theta'(x^{-1}))
\end{equation}
has to hold. Since generically $\theta$ is irreducible, we either have $\theta'=\theta$ or $\theta'=x^q\theta(x^{-1})$, which proves the assertion.
\end{proof}

\begin{example}\label{ex:ma(1,1)ident}
We consider again the autocovariance variety $\mathcal{MA}_{(1,1)}$ and assume that $\ga\in\mathcal{MA}_{(1,1)}$ is generated by a generic moving average polynomial $\theta$ as in the setting of Theorem~\ref{thm:ident}, that is, $\theta$ is irreducible. Then the fiber of $\gamma$ is given by the equations
\begin{align*}
a_{00}&=\sqrt{\frac{\ga_{00} \ga_{11}-\ga_{01}\ga_{10}-\sqrt{(\ga_{01} \ga_{10}-\ga_{00} \ga_{11})^2-4 \ga_{11}^2 (\ga_{11}-\ga_{1-1})^2}}{2(\ga_{11}-\ga_{1-1})}},\\
a_{10}&=\sqrt{\frac{-\ga_{00} \ga_{1-1}+\ga_{01}\ga_{10}-\sqrt{(\ga_{01} \ga_{10}-\ga_{00} \ga_{1-1})^2-4 \ga_{1-1}^2 (\ga_{11}-\ga_{1-1})^2}}{2(\ga_{11}-\ga_{1-1})}},\\
a_{01}&=\sqrt{\frac{-\ga_{00} \ga_{1-1}+\ga_{01}\ga_{10}+\sqrt{(\ga_{01} \ga_{10}-\ga_{00} \ga_{1-1})^2-4 \ga_{1-1}^2 (\ga_{11}-\ga_{1-1})^2}}{2(\ga_{11}-\ga_{1-1})}},\\
a_{11}&=\sqrt{\frac{\ga_{00} \ga_{11}-\ga_{01}\ga_{10}+\sqrt{(\ga_{01} \ga_{10}-\ga_{00} \ga_{11})^2-4 \ga_{11}^2 (\ga_{11}-\ga_{1-1})^2}}{2(\ga_{11}-\ga_{1-1})}},
\end{align*}
and $a'_{00}=a_{11}$, $a'_{01}=a_{10}$, $a'_{10}=a_{01}$, $a'_{11}=a_{00}$.
Substituting in the formulas from Example~\ref{Ex1}, we observe that the discriminants 
\begin{align*}
(\ga_{01} \ga_{10}-\ga_{00} \ga_{11})^2-4 \ga_{11}^2 (\ga_{11}-\ga_{1-1})^2,\\
(\ga_{01} \ga_{10}-\ga_{00} \ga_{1-1})^2-4 \ga_{1-1}^2 (\ga_{11}-\ga_{1-1})^2,
\end{align*}
are equal to
\begin{align*}
\left(a_{00}^2-a_{11}^2\right)^2 (a_{01} a_{10}-a_{00} a_{11})^2,\\
\left(a_{01}^2-a_{10}^2\right)^2 (a_{01} a_{10}-a_{00} a_{11})^2,
\end{align*}
which are nonnegative in the real case (as they should be when the moving average parameters are real).

If however $\theta$ is not irreducible, it has to be the product of two linear factors. Then identifiability from the above theorem fails and we have up to 4 preimages in the fiber of $\gamma$. We note that this explains the irreducible component \eqref{help3} of the singular locus from Theorem \ref{thm:MA11}, which is equivalent to \eqref{help6}. 
In order to see this, we first assume that Equation~\eqref{help6} holds. This implies that there exists a constant $b^*\in\bbr$ such that we have $a_{01}=b^*a_{00}$ and $a_{11}=b^*a_{10}$. Thus, the polynomial $\theta$ satisfies
\begin{equation*}
\theta(x)=(a_{00}+a_{10}x_1)(1+b^*x_2)
\end{equation*}
and is therefore reducible.
On the other hand, assuming that 
\begin{equation*}
\theta(x)=a_{00}+a_{10}x_1+a_{01}x_2+a_{11}x_1x_2=(b_{10}+b_{11}x_1)(b_{20}+b_{21}x_2)
\end{equation*}
for some real-valued coefficients $b_{10},b_{11},b_{20},b_{21}$, we can deduce \eqref{help6}. One of the four preimage points in the fiber of $\ga$ is given by the equations
\begin{equation*}
a_{00}=\frac{1}{2} \sqrt{\ga_{00}+\sqrt{\ga_{00}^2-4 \left(\ga_{01}^2+\ga_{10}^2-4 \ga_{11}^2\right)}- \sqrt{2\ga_{00} \sqrt{\ga_{00}^2-4 \left(\ga_{01}^2+\ga_{10}^2-4
   \ga_{11}^2\right)}+2\ga_{00}^2-4 \left(\ga_{01}^2+\ga_{10}^2\right)}},
\end{equation*}
\begin{equation*}
a_{01}=\frac{1}{2} \sqrt{\ga_{00}-\sqrt{\ga_{00}^2-4 \left(\ga_{01}^2+\ga_{10}^2-4 \ga_{11}^2\right)}-\sqrt{-2 \ga_{00} \sqrt{\ga_{00}^2-4 \left(\ga_{01}^2+\ga_{10}^2-4
   \ga_{11}^2\right)}+2 \ga_{00}^2-4 \left(\ga_{01}^2+\ga_{10}^2\right)}},
\end{equation*}
\begin{equation*}
a_{10}=\frac{1}{2} \sqrt{\ga_{00}-\sqrt{\ga_{00}^2-4 \left(\ga_{01}^2+\ga_{10}^2-4 \ga_{11}^2\right)}+\sqrt{-2 \ga_{00} \sqrt{\ga_{00}^2-4 \left(\ga_{01}^2+\ga_{10}^2-4
   \ga_{11}^2\right)}+2 \ga_{00}^2-4 \left(\ga_{01}^2+\ga_{10}^2\right)}},
\end{equation*}
\begin{equation*}
a_{11}=\frac{1}{2} \sqrt{\ga_{00}+\sqrt{\ga_{00}^2-4 \left(\ga_{01}^2+\ga_{10}^2-4 \ga_{11}^2\right)}+ \sqrt{2\ga_{00} \sqrt{\ga_{00}^2-4 \left(\ga_{01}^2+\ga_{10}^2-4
   \ga_{11}^2\right)}+2\ga_{00}^2-4 \left(\ga_{01}^2+\ga_{10}^2\right)}}.
\end{equation*}
\end{example}
\begin{remark}
If, in contrast to the setting of Theorem~\ref{thm:ident}, $\theta$ is not irreducible, then the fiber of $\ga$ under $\Ga_q$ consists of more than two preimages, as illustrated in the previous example. By \eqref{eq:rev}, the maximum number of preimages is $2^{q_1+\cdots+q_d}$ and occurs exactly when $\theta$ is completely separable, that is, a product of linear forms.
\end{remark}

\section{Parameter Estimation}

In this section we go one step further and consider the problem of parameter estimation from observed sample points. We consider two methods: least squares estimation and maximum likelihood estimation. Both involve solving polynomial systems of equations. Algebraically, the computational complexity of the estimation problem is measured by the \textit{ED degree} \cite{draisma2016euclidean} of the associated variety in the first case and by the \textit{ML degree} \cite{Amendola19, catanese2006maximum} in the second.

\subsection{Least squares estimation}

Let  $(Y_t)_{t\in\bbz^d}$ be a $MA(q)$ random field, which by Definition~\ref{def:MA} has mean zero. If we are given observations of $Y$ on a lattice $L=\{ 1,...,n \}^d$, we can estimate the autocovariance function $\ga(t)$ by the empirical autocovariance estimator
\begin{equation*}
\hat{\ga}_n(t):=\frac{1}{|B_{n,t}|}\sum_{s\in B_{n,t}}Y(t+s)Y(s), \quad t \in [-q,q],\quad t\succeq0,
\end{equation*}
where
\begin{equation*}
B_{n,t}:=\{ s\in\bbz^d \vert s,s+t\in L \} \text{ and }|B_{n,t}|=\prod_{i=1}^d (n-|t_i|)\bone_{\{|t_i|\leq n\}}.
\end{equation*}
If $\hat{\ga}_n(t)$ were exact values, we would be in the situation of the previous section. However, these are just numerical estimates which form a point $\hat{\ga}_n$ that almost surely lies outside the model $\mathcal{MA}_q$. 
One approach is to project the estimated vector $\hat{\ga}_n$ onto the autocovariance variety $\mathcal{MA}_{q}$, that is, obtaining $\ga_n^{*}\in\mathcal{MA}_{q}$ which has the smallest Euclidean distance to $\hat{\ga}_n$:
\begin{equation}\label{eq:lse}
\ga_n^{*}:=\argmin_{\ga\in\mathcal{MA}_{q}}\|\ga-\hat{\ga}_n\|.
\end{equation}
The number of critical points of this least squares optimization problem is counted by the Euclidean distance degree (\emph{ED degree}). 
\begin{proposition}\label{prop:EDMA11}
The ED degree of $\mathcal{MA}_q$ is 1 if $d=1$. The ED degree of $\mathcal{MA}_{(1,1)}$ is 16.
\end{proposition}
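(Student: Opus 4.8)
The plan is to treat the two claims separately, handling the $d=1$ case conceptually and the $\mathcal{MA}_{(1,1)}$ case by a direct critical-point computation. For $d=1$ I would invoke Proposition~\ref{prop:d1}, which gives $\mathcal{MA}_{q}=\PP^q$. The least squares problem \eqref{eq:lse} is posed over the affine cone of this projective variety, which is the entire ambient space $\CC^{q+1}$. The squared-distance function $\ga\mapsto\|\ga-\hat{\ga}_n\|^2$ on all of $\CC^{q+1}$ has gradient $2(\ga-\hat{\ga}_n)$, vanishing only at the single point $\ga=\hat{\ga}_n$. Hence every data vector already lies in the model, the projection is the identity, there is exactly one complex critical point, and the ED degree equals $1$.

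For $\mathcal{MA}_{(1,1)}$ I would set up the Lagrange-multiplier system for the hypersurface $V(f)\subseteq\CC^5$, where $f$ is the quartic \eqref{eq:quartic} in the variables $g_{00},g_{10},g_{01},g_{1-1},g_{11}$. A point $g$ is a critical point of $\|g-u\|^2$ on the smooth locus exactly when $g-u$ is parallel to $\nabla f(g)$, i.e.
\[
g-u=\lambda\,\nabla f(g),\qquad f(g)=0,
\]
a square system of six equations in the six unknowns $(g,\lambda)$. The ED degree is the number of solutions for generic $u$, which I would extract either by a Gr\"obner-basis elimination computing the degree of the resulting zero-dimensional ideal, or numerically by tracking the solutions via homotopy continuation (cf. \cite{draisma2016euclidean}). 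Carrying this out should yield the value $16$.

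The main obstacle is that $\mathcal{MA}_{(1,1)}$ is singular: by Theorem~\ref{thm:MA11} its singular locus is a quadratic surface. The naive Lagrange system therefore acquires spurious solutions supported on that locus, where $\nabla f$ degenerates and no genuine smooth-locus critical point exists. To obtain the correct count I would saturate the critical ideal by the singular-locus ideal $\langle \partial_{g_{00}}f,\ldots,\partial_{g_{11}}f\rangle$ before computing its degree, and then confirm that for generic $u$ the saturated ideal is zero-dimensional of degree exactly $16$. As an independent check I would recompute the ED degree numerically for a random real $u$ via homotopy continuation and verify that $16$ solutions appear; one could further cross-validate against the polar-degree characterization of ED degree, provided the contribution of the singularities is accounted for correctly.
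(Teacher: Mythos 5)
Your proposal is correct and follows essentially the same route as the paper: the $d=1$ case is read off from Proposition~\ref{prop:d1} (the variety fills $\PP^q$, so the unique critical point is $\hat{\ga}_n$ itself), and the $\mathcal{MA}_{(1,1)}$ case is the standard ED-degree computation for the quartic hypersurface, imposing that $g-u$ be parallel to $\nabla f$ (the paper encodes this via $2\times 2$ minors rather than an explicit multiplier $\lambda$, which is equivalent) and, crucially, saturating by the singular locus before taking the degree --- exactly the step you identified. No gaps.
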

\begin{proof}
The first part is a consequence of Proposition \ref{prop:d1}. In fact, for $d=1$ the unique critical point for \eqref{eq:lse} is $\ga_n^{*} = \hat{\ga}_n$. For the second we use the following \texttt{M2} code:
\begin{verbatim}
R = QQ[g00,g01,gm11,g10,g11]
I = ideal(g01^2*gm11^2-g00*g01*gm11*g10+g01^2*g10^2+gm11^2*g10^2+
    g00^2*gm11*g11-2*g01^2*gm11*g11-4*gm11^3*g11-g00*g01*g10*g11
    -2*gm11*g10^2*g11+g01^2*g11^2+8*gm11^2*g11^2+g10^2*g11^2-4*gm11*g11^3) 
sing = ideal singularLocus I
u = {5,7,13,11,3};
M = (matrix{apply(# gens R,i->(gens R)_i-u_i)})||(transpose(jacobian I));
time J = saturate(I + minors(2,M), sing);
dim J, degree J
\end{verbatim}
The vector $u$ represents a generic choice of $\ga$ and the saturation is needed to remove the critical points that lie in the singular locus.
\end{proof}

 We illustrate with an example: 

\begin{example}\label{ex:MA11}
We simulate $2500$ points of a $MA(1,1)$ random field on a $50\times50$ grid in \texttt{R} (see Figure~\ref{fig1}). As white noise we take an i.i.d. standard Gaussian random field. The moving average parameters are chosen as
\begin{equation*}
a_{00}=7,\qquad a_{01}=-5,\qquad a_{10}=3,\qquad a_{11}=1
\end{equation*}
and the corresponding autocovariances values are
\begin{equation*}
\ga = (\ga_{00},\ga_{01},\ga_{10},\ga_{11},\ga_{1-1}) = (84, 16, -32 , 7 , -15).
\end{equation*}
After centering the sample, we compute the empirical autocovariances
\begin{equation*}
\hat{\ga}_n=(\hat{\ga}_n(0,0),\hat{\ga}_n(0,1),\hat{\ga}_n(1,0),\hat{\ga}_n(1,1),\hat{\ga}_n(1,-1)).
\end{equation*}
By Proposition \ref{prop:EDMA11}, we expect 16 complex critical points, and we compute them numerically.
Six of them are real
\begin{align*}
& (87.1147, 18.6511, -33.4739, 5.78808, -17.312),  \\
& (80.8137, 30.7661, -23.1126, -3.96875, -28.7833), \\
& (61.9284, -24.7157, -16.0001, 1.76548, 19.994),     \\
& (55.2165, 8.80716, 26.5528, 0.977029, 8.45708),    \\
& (71.9207, -7.85594, -8.51067, 35.9693, 0.649541), \\
& (63.1632, -18.9463, -12.5151, 0.0189543, 24.6219).
\end{align*}
The first line has the lowest Euclidean distance to the estimated point
\begin{equation*}
\hat{\ga}_n=(86.6439,  19.1877, -34.2433,   6.6726, -17.3195),
\end{equation*}
and therefore
\begin{equation*}
\ga_n^{*}=(87.1147, 18.6511, -33.4739, 5.78808, -17.312).
\end{equation*}
Moreover, we have that
\begin{equation*}
\|\ga-\hat{\ga}_n\|=5.2604\quad\text{and}\quad\|\ga-\ga_n^{*}\|=5.0711,
\end{equation*}
so that projecting onto the autocovariance variety improves the empirical estimate.
\end{example}

\begin{figure}[ht]
  \centering
  \includegraphics[scale=0.6]{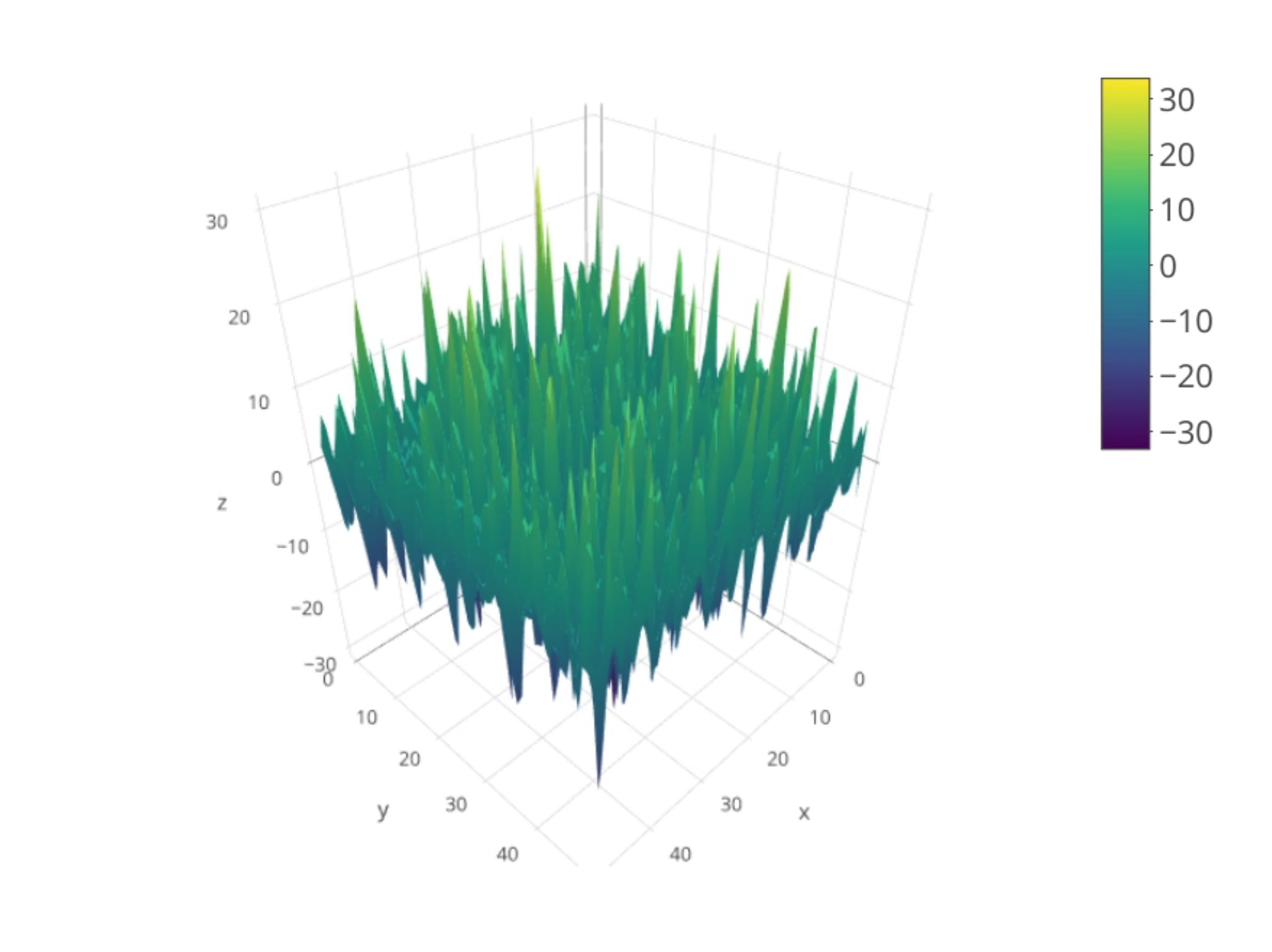}
  \caption{$MA(1,1)$ random field in Example~\ref{ex:MA11}\label{fig1}}
\end{figure}

\begin{table}[H]
\centering
\begin{tabular}{cc}
\hline
$q$ & ED degree \\ 
\hline
(1,1) & 16 \\
(1,1) & 169 \\
(1,2) & 1600 \\
(1,4) &  14641 \\
\hline
\end{tabular}
\caption{ED degree of the variety $\mathcal{MA}_q$ with $q=(1,k)$,  $k=1,2,3,4$}\label{table2}
\end{table}

The computation of the ED degree for $\mathcal{MA}_{(1,k)}$ with $k>1$ is harder than for $\mathcal{MA}_{(1,1)}$. We therefore resort to numerical methods and obtain Table \ref{table2} above. The computations suggest the following pattern.

\begin{conjecture}\label{prop:EDMA11}
The ED degree of $\mathcal{MA}_{(1,k)}$ equals $\frac{(3^{k+1}-1)^2}{4} $ for all $k>0$.
\end{conjecture}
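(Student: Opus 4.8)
The plan is to reduce the computation of $\mathrm{EDdeg}(\mathcal{MA}_{(1,k)})$ to a count of critical points on the parameter space $\PP^{2k+1}$ and then to evaluate that count by exploiting the bilinear structure of the autocovariance map. Concretely, write $\theta(x)=P(x_2)+x_1Q(x_2)$ with $P(x_2)=\sum_{j=0}^k a_{0j}x_2^j$ and $Q(x_2)=\sum_{j=0}^k a_{1j}x_2^j$. From \eqref{thetagamma} the kept autocovariances split into the cross-correlation row $\gamma(1,t_2)=\sum_j a_{0,j}a_{1,j+t_2}$ for $t_2\in[-k,k]$ and the autocorrelation row $\gamma(0,t_2)=\sum_j(a_{0,j}a_{0,j+t_2}+a_{1,j}a_{1,j+t_2})$ for $t_2\in[0,k]$. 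Fix a generic $u\in\PP^N$ and consider $f(a)=\|\Gamma_q(a)-u\|^2$ on $\PP^{2k+1}$. Since $\Gamma_q$ is a base-point-free morphism (Lemma~\ref{lem:basepoints}) that is generically $2$-to-$1$ onto its image with the reversal involution $a\mapsto a'$, $a'_{ij}=a_{1-i,k-j}$ (Theorem~\ref{thm:ident}), the critical points of $f$ lying over the smooth locus of $\mathcal{MA}_{(1,k)}$ occur in involution-pairs and map two-to-one onto the ED-critical points of $\mathcal{MA}_{(1,k)}$. Hence the first task is to show that this parameter-space count equals $(3^{k+1}-1)^2/2$, after discarding the critical points of $f$ that map into the singular locus --- exactly the saturation performed in the \texttt{Macaulay2} computation for $\mathcal{MA}_{(1,1)}$.

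Second, I would explain the perfect-square shape $\big(\sum_{j=0}^k 3^j\big)^2$ through a decoupling of the Lagrange conditions. The involution $a\mapsto a'$ together with the block separation of $\gamma$ into its $t_1=1$ and $t_1=0$ parts suggests that, after a linear change of coordinates diagonalizing the involution (the symmetric and antisymmetric parts $a_{ij}\pm a_{1-i,k-j}$), the gradient equations split into two independent subsystems of the same combinatorial type, whose solution counts multiply. I would aim to identify each subsystem with a one-parameter-family ED problem whose Newton polytopes yield a normalized mixed volume equal to $\sum_{j=0}^k 3^j=\tfrac{3^{k+1}-1}{2}$; the factor $3$ per degree is natural because each lag $j$ contributes the three interaction types $PP$, $PQ$ and $QQ$ visible in \eqref{thetagamma}. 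Establishing this decoupling rigorously, and matching each factor to the geometric series, is the technical heart.

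Third, I would make the count effective by a Bernstein--Khovanskii--Kushnirenko argument: compute the mixed volume of the Newton polytopes of the $2k+2$ critical equations $\nabla_a f=\lambda a$ (together with the Euler homogeneity relation), verify that for generic $u$ there are no stray solutions on coordinate hyperplanes or at toric infinity, and subtract the contributions supported on the singular locus of $\mathcal{MA}_{(1,k)}$. An alternative, more intrinsic route is to compute the polar degrees $\delta_0,\dots,\delta_{2k+1}$ from the conormal variety and invoke $\mathrm{EDdeg}=\sum_i\delta_i$ on the smooth part \cite{draisma2016euclidean}, again correcting for singularities; since $\mathcal{MA}_{(1,k)}$ is a linear projection of the quadratic Veronese $\mathcal{V}_{2k+1,2}$ (Lemma~\ref{lem:veronese}), its conormal geometry is in principle tractable.

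The hardest part will be the singular-locus bookkeeping. As the $q=(1,1)$ case already shows (Theorem~\ref{thm:MA11}), $\mathcal{MA}_{(1,k)}$ carries a positive-dimensional singular locus coming from reducible moving average polynomials, and these loci absorb many would-be critical points; controlling their contribution uniformly in $k$ --- rather than case by case --- is what prevents a one-line mixed-volume proof and is precisely where I expect the argument to require the most care, together with a clean induction on $k$ to close the recursion into the claimed closed form $(3^{k+1}-1)^2/4$.
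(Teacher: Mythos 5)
First, note that the paper does not prove this statement: it appears as a conjecture, supported only by the numerically computed ED degrees $16$, $169$, $1600$, $14641$ for $k=1,2,3,4$ reported in Table~\ref{table2}. So there is no proof in the paper to compare yours against, and your proposal has to stand on its own; as written it is a research plan rather than a proof. The sound part is the opening reduction: since $\Gamma_q$ is base-point-free (Lemma~\ref{lem:basepoints}) and generically $2$-to-$1$ onto $\mathcal{MA}_{(1,k)}$ (Theorem~\ref{thm:ident}), counting critical points of $a\mapsto\|\Gamma_q(a)-u\|^2$ on the parameter space and dividing by two is a legitimate route to the ED degree, provided one also checks that none of these critical points lie over the singular or ramification loci --- which is exactly the saturation the paper performs in its \texttt{Macaulay2} computation for $k=1$, and which you correctly flag.

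The genuine gaps are in your second and third steps. The decoupling claim does not follow from what you invoke: invariance of $f(a)=\|\Gamma_q(a)-u\|^2$ under the involution $a\mapsto a'$ only says that, in the symmetrized coordinates $s=a+a'$ and $t=a-a'$, the function is even in $t$; it does not force the critical equations to split into two \emph{independent} subsystems whose root counts multiply. A generic involution-invariant function has fully coupled critical equations, so the perfect-square shape of $\tfrac{(3^{k+1}-1)^2}{4}$ is at this point an empirical observation, not a mechanism, and the identification of each putative factor with a system of normalized mixed volume $\sum_{j=0}^k 3^j$ is asserted rather than computed. The BKK step likewise needs more than a mixed-volume evaluation: ED-critical systems typically carry excess solutions at toric infinity and on coordinate hyperplanes, and for a singular variety one must subtract the critical points absorbed by the singular locus, which for $\mathcal{MA}_{(1,k)}$ is positive-dimensional, grows with $k$, and comes from reducible moving average polynomials. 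You acknowledge both difficulties but resolve neither. Until the decoupling is actually established (or replaced by an intrinsic computation, e.g.\ of the polar degrees of this projection of the quadratic Veronese) and the singular-locus correction is controlled uniformly in $k$, the proposal does not prove the conjecture; it is a reasonable outline of how one might attack it.
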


Note that the optimization problem \eqref{eq:lse} gives a point in $\mathcal{MA}_q$ and not a corresponding $a^* \in \PP^Q$.
Theoretically, one could apply the identifiability results of the last section to obtain such $a^*$ by $a^*_n:=\Ga_q^{-1}(\ga_n^{*})$. However, since $\gamma^*_n$ will most often be a numerical approximation, this is not feasible in practice. Instead, one should solve the optimization problem in parametrized form:
\begin{equation*}\label{eq:lse2}
a^{*}_n:=\argmin_{a\in\Theta}\|\ga_a-\hat{\ga}_n\|,
\end{equation*}
where $\Theta\subseteq\RR^{Q+1}$ is a compact parameter space. We note then that the ED degree gets multiplied by the algebraic identifiability degree of the model parametrization.

\subsection{Maximum likelihood estimation}

Suppose as before that $(Y_t)_{t\in\bbz^d}$ is a $MA(q)$ random field with mean zero. Furthermore, we assume that $n$ observations $Y(t^1),...,Y(t^n)$ are given, where the vectors $t^1,...,t^n\in\bbz^d$ are ordered according to the lexicographic order. If the driving white noise $(Z_t)_{t\in\bbz^d}$ is Gaussian, then the vector $Y:=(Y(t^1),...,Y(t^n))^\top$ is Gaussian as well, and its likelihood is of the form
\begin{equation*}
L(a)\propto |\Sigma|^{-1/2}\exp\left( -\frac{1}{2}Y^\top\Sigma^{-1}Y \right),
\end{equation*}
where $\Sigma=\Sigma(a)$ is the covariance matrix of $Y$ and $|\Sigma|$ its determinant. The  \emph{maximum likelihood estimator (MLE)} is then defined as the value which maximizes the log-likelihood:
\begin{equation}\label{MLE}
\hat a_n:= \argmax_{a\in\Theta} -\frac{1}{2}\log(|\Sigma|)-\frac{1}{2}Y^\top\Sigma^{-1}Y,
\end{equation}
where $\Theta\subseteq\RR^{Q+1}$ is a compact parameter space.
If $Z$ is not Gaussian, then the latter estimator is called the \emph{quasi maximum likelihood estimator (QMLE)}.

\begin{remark}
In \cite{Yao06} it was shown that under mild assumptions including an invertibility condition, the QMLE $\hat a_n$ is consistent as $n$ tends to infinity. Furthermore, a slightly modified version of $\hat a_n$ (to account for the edge effect) is shown to be asymptotically normal in \cite[Theorem 2]{Yao06}.
\end{remark}

Conveniently, the optimization problem \eqref{MLE} is \emph{still} algebraic, in the sense that the critical or score equations form a system of rational functions of $a\in \Theta$. The number of critical points of the log-likelihood is invariant under generic data $Y$ and this is known as the maximum likelihood degree (\emph{ML degree}).  

We analyze the first nontrivial case, when $q=1$ and $n=2$. Even this simple model is interesting. It has been observed that the MLE can sometimes correspond to non-invertible models, which in this case is equivalent to $|a_0|=|a_1|$, and contrary to what was previously thought, this occurs with positive probability \cite{Cryer81}. 

\begin{proposition}\label{ex:MA1n2}
Consider the $MA(1)$ model with observed sample $Y=(Y_1,Y_2)$. The ML degree is 4, and these four critical points can be divided into three groups:
\begin{enumerate}
\item The parameters $a_0$ and $a_1$ satisfy the two equations
\begin{equation*}
a_0 a_1=Y_1 Y_2\qquad\text{and}\qquad a_0^2+a_1^2=\frac{Y_1^2+Y_2^2}{2}.
\end{equation*}
\item
\begin{equation*}
a_0=a_1=\sqrt{\frac{Y_1^2+Y_2^2-Y_1 Y_2}{3}}
\end{equation*}
\item
\begin{equation*}
a_0=-a_1=\sqrt{\frac{Y_1^2+Y_2^2+Y_1 Y_2}{3}}
\end{equation*}
\end{enumerate}
If $Y_1Y_2=0$, then the MLE corresponds to a degenerate model ($a_0a_1=0)$. Otherwise let $W=\frac{Y_1^2 + Y_2^2}{2Y_1Y_2}$ and the MLE is given as:
\begin{itemize}
\item the point in (3) if $-2<W<0$
\item the point in (2) if $0<W<2$
\item the points in (1) otherwise.
\end{itemize}
\end{proposition}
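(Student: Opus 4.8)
The plan is to write the likelihood explicitly and exploit the very low dimension. For $MA(1)$ with the two consecutive observations $Y=(Y_1,Y_2)^\top$, Proposition~\ref{prop1} gives $\ga_0:=\ga(0)=a_0^2+a_1^2$ and $\ga_1:=\ga(1)=a_0a_1$, so the covariance matrix is the symmetric Toeplitz matrix $\Sigma=\left(\begin{smallmatrix}\ga_0 & \ga_1\\ \ga_1 & \ga_0\end{smallmatrix}\right)$ with $\det\Sigma=\ga_0^2-\ga_1^2$, and the log-likelihood (up to an additive constant) is
$$\ell=-\tfrac12\log(\ga_0^2-\ga_1^2)-\tfrac12\,\frac{\ga_0(Y_1^2+Y_2^2)-2\ga_1 Y_1Y_2}{\ga_0^2-\ga_1^2}.$$
First I would form the two score equations $\pd_{a_0}\ell=\pd_{a_1}\ell=0$ by the chain rule through $(\ga_0,\ga_1)$; writing $A:=\pd_{\ga_0}\ell$ and $B:=\pd_{\ga_1}\ell$ and using $\pd_{a_0}\ga_0=2a_0$, $\pd_{a_0}\ga_1=a_1$, etc., they become $2a_0A+a_1B=0$ and $2a_1A+a_0B=0$. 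Taking the sum and the difference factors the system neatly as
$$(a_0+a_1)(2A+B)=0\qquad\text{and}\qquad (a_0-a_1)(2A-B)=0.$$

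Next I would branch on the vanishing of the factors. The case $a_0+a_1=a_0-a_1=0$ forces $a=0$ and is discarded. The three surviving cases are exactly the three groups: $\{A=B=0\}$ is the saturated critical point, where solving $A=B=0$ yields $\ga_1=Y_1Y_2$ and $\ga_0=(Y_1^2+Y_2^2)/2$ (group (1)); while $\{a_0=a_1,\,2A+B=0\}$ and $\{a_0=-a_1,\,2A-B=0\}$ reduce, after substituting $\ga_0=2a_0^2,\ \ga_1=\pm a_0^2$, to the scalar equations giving $a_0^2=(Y_1^2+Y_2^2\mp Y_1Y_2)/3$ (groups (2) and (3)). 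Counting solutions up to the sign symmetry $a\mapsto-a$ (which fixes $\Sigma$, hence $\ell$), group (1) contributes two points, namely the invertible and non-invertible preimages of the saturated $\Sigma$, while groups (2) and (3) contribute one each, for four critical points in total; this is the ML degree $4$.

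For selecting the MLE I would first establish the reduction that \emph{every} critical point satisfies $Y^\top\Sigma^{-1}Y=2$. Indeed, the radial scaling $a\mapsto\la a$ sends $\Sigma\mapsto\la^2\Sigma$, so $\tfrac{d}{d\la}\ell(\la a)\big|_{\la=1}=-2+Y^\top\Sigma^{-1}Y$, which must vanish at a critical point. Hence there $\ell=-\tfrac12\log\det\Sigma-1$, so the MLE is the critical point of smallest determinant. Evaluating, $\det\Sigma$ equals $\tfrac14(Y_1^2+Y_2^2)^2-(Y_1Y_2)^2$, $\tfrac13(Y_1^2+Y_2^2-Y_1Y_2)^2$ and $\tfrac13(Y_1^2+Y_2^2+Y_1Y_2)^2$ for groups (1), (2), (3). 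Writing these in terms of $W$ via $Y_1^2+Y_2^2=2W\,Y_1Y_2$, they are proportional to $W^2-1$, $(2W-1)^2/3$ and $(2W+1)^2/3$, and the differences factor as perfect squares, $(W^2-1)-(2W\mp1)^2/3=-(W\mp2)^2/3\le0$. Thus group (1) always has the smallest determinant whenever it is available, and between (2) and (3) the sign of $W$ decides since $(2W-1)^2-(2W+1)^2=-8W$.

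The remaining and subtlest point is real feasibility. A pair $(\ga_0,\ga_1)$ arises from real coefficients iff $\ga_0\ge2|\ga_1|$ (equivalently $(a_0\pm a_1)^2\ge0$). Groups (2) and (3) sit on the boundary $\ga_0=2|\ga_1|$ and are always real, but the saturated point of group (1) is real iff $(Y_1^2+Y_2^2)/2\ge2|Y_1Y_2|$, i.e.\ $|W|\ge2$. Since $|W|\ge1$ always holds by the AM--GM inequality, group (1) is admissible precisely in the ``otherwise'' range $|W|\ge2$, where by the determinant comparison it wins; for $1\le|W|<2$ it corresponds to complex parameters and is excluded, leaving group (2) for $0<W<2$ and group (3) for $-2<W<0$ by the sign comparison. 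Finally, $Y_1Y_2=0$ forces $\ga_1=0$ at the optimum, i.e.\ the degenerate model $a_0a_1=0$, which I would record separately. I expect the main obstacle to be exactly this feasibility bookkeeping: purely algebraically group (1) always minimizes the determinant, so recovering the stated $W$-thresholds depends entirely on correctly discarding the non-real saturated solution when $|W|<2$.
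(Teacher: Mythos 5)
Your proposal is correct and follows essentially the same route as the paper: identify the four critical points in the three groups, compare the likelihood values (your determinant comparison via the homogeneity identity $Y^\top\Sigma^{-1}Y=2$ reproduces exactly the paper's perfect-square inequalities), and observe that the group (1) solution is real precisely when $|W|\ge 2$, which forces the MLE onto (2) or (3) otherwise. The explicit factorization of the score equations into $(a_0\pm a_1)(2A\pm B)=0$ and the reduction to comparing $\det\Sigma$ are nice derivations of steps the paper only asserts computationally, but they do not change the structure of the argument.
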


\begin{proof}
Since we have a $MA(1)$ process and $Y=(Y_1,Y_2)$, we have $\Sigma = \left( \begin{smallmatrix}
\ga(0) & \ga(1)\\
\ga(1)& \ga(0)
\end{smallmatrix} \right)$, and the log-likelihood takes the form 
\begin{equation}
\ell(a_0,a_1) = -\frac{1}{2}\log((a_0^2+a_1^2)^2-a_0^2a_1^2)-\frac{1}{2}(Y_1,Y_2) \begin{pmatrix}
a_0^2+a_1^2 & a_0a_1\\
a_0a_1 & a_0^2+a_1^2
\end{pmatrix}^{-1}(Y_1,Y_2)^\top.
\end{equation}

There are generically four solutions to the system $\frac{\partial \ell}{\partial a_0} = \frac{\partial \ell}{\partial a_1} = 0$. This means the ML degree is 4. The critical points can be divided into the three groups (1), (2) and (3) of the statement. In order to find the MLE depending on the values of $Y_1,Y_2$, we evaluate the likelihood function $\ell$ at these 3 groups of points. In fact, substituting $a_0$ and $a_1$ from (1), (2) and (3) into the log-likelihood function, we obtain
\begin{enumerate}[label=(\roman*)]
\item $-\frac{1}{2} \log \left(\left(Y_1^2-Y_2^2\right)^2\right)-1+\log (2)$,
\item $-\frac{1}{2} \log \left(\frac{1}{3} \left(Y_1^2-Y_1 Y_2+Y_2^2\right)^2\right)-1$,
\item $-\frac{1}{2} \log \left(\frac{1}{3} \left(Y_1^2+Y_1 Y_2+Y_2^2\right)^2\right)-1$.
\end{enumerate}
Computing (i) - (iii) gives the expression  $$\frac{1}{2} \left(\log \left(4 \left(Y_1^2-Y_1
   Y_2+Y_2^2\right)^2\right)-\log \left(3
   \left(Y_1^2-Y_2^2\right)^2\right)\right), $$
which is always nonnegative since
$$ 4 \left(Y_1^2-Y_1
   Y_2+Y_2^2\right)^2- 3
   \left(Y_1^2-Y_2^2\right)^2=\left(Y_1^2-4Y_1
   Y_2+Y_2^2\right)^2\geq0.$$ 
Analogously, (i) is greater than or equal to (ii) from
$ \left(Y_1^2+4Y_1
   Y_2+Y_2^2\right)^2\geq0.$
   
Hence, the first value (i) is always larger than or equal to the values (ii) and (iii), independently of $Y_1$ and $Y_2$. We would conclude that the maximizers are always given by (1), but the points may not be real. Indeed, under (1), if $$a_0a_1 \geq 0 \text{ then } a_0^2+a_1^2 \geq 2a_0a_1 \text{ and thus } W=\frac{Y_1^2 + Y_2^2}{2Y_1Y_2} \geq 2$$ while $$a_0a_1 \leq 0 \text{ implies } a_0^2+a_1^2 \leq -2a_0a_1 \text{ and hence } W \leq -2.$$
Direct inspection reveals that the likelihood for (2) is larger than the one for (3) if and only if $W>0$. Note that when $W=-2$ the points (1) and (3) coincide, while $W=2$ means that (1) and (2) coincide.
\end{proof}

Compare our conditions for $W$ with the similar ones found by \cite{Cryer81} in their effort of computing the distribution of the MLE in this $q=1, n=2$ case (note the different parametrization in terms of $\sigma$, $\theta$). Furthermore, it is gratifying to see that our computations provide a simple explanation for the `curious' phenomenon that the MLE can belong to a non-invertible model. Algebraically, the points in (1) always maximize the likelihood, but for the specified region of $Y_1,Y_2$ these points are strictly complex (even though evaluating at the likelihood yields real values!), which means then that (2) or (3) becomes the MLE.

In \cite{Zhang16}, standard numerical optimization routines were used to find the MLE in samples of $MA(q)$ models with $q=1,2,3,4$. The simulations show the MLE can again lie on the non-invertible boundary. 

\begin{example}
Consider a $MA(1)$ process with $n=3$ sample points $Y=(Y_1,Y_2,Y_3)$. The ML degree is now 8. The expressions for the two non-invertible models $|a_0|=|a_1|$ are:
\begin{align*}
a_0 &=a_1 = \sqrt{\frac{3x_1^2+4x_2^2+3x_3^2-4x_1x_2+2x_1x_3-4x_2x_3}{12}}\\
a_0 &=-a_1 = \sqrt{\frac{3x_1^2+4x_2^2+3x_3^2+4x_1x_2+2x_1x_3+4x_2x_3}{12}} \\
\end{align*}
Obtaining closed form expressions for the other 6 critical points is also possible.
\end{example}

For $n>2$, the matrix $\Sigma$ is tridiagonal: it has $\ga(0)$ in the diagonal and $\ga(1)$ in the upper and lower diagonal. Our ML degree computations of $MA(1)$ for $n=2,3,\dots$ reveal the following pattern:

\begin{conjecture}
The ML degree of $MA(1)$ for $n>1$ sample points is equal to $4(n-1)$.
\end{conjecture}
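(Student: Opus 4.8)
The plan is to pass to autocovariance coordinates, reduce the two score equations to a single univariate polynomial whose degree I can control, and then transport the count back to the parameter space through the quadratic parametrization $a\mapsto\ga_a$. In the coordinates $\ga_0=\ga(0)$, $\ga_1=\ga(1)$ the model is the linear family $\Sigma=\ga_0 I_n+\ga_1 J_n$, where $J_n$ is the $n\times n$ matrix with ones on the two off-diagonals. Since $I_n$ and $J_n$ commute, they are simultaneously diagonalized by a fixed orthogonal matrix $V$ independent of $(\ga_0,\ga_1)$; the eigenvalues of $J_n$ are the distinct numbers $\mu_k=2\cos(k\pi/(n+1))$ with $\sum_k\mu_k=\operatorname{tr}(J_n)=0$. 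Writing $c_k:=(V^\top Y)_k^2$ and $\la_k:=\ga_0+\ga_1\mu_k$ for the eigenvalues of $\Sigma$, the equations $\partial_{\ga_0}\ell=\partial_{\ga_1}\ell=0$ become
\[
\sum_{k=1}^n\frac{\la_k-c_k}{\la_k^2}=0,\qquad \sum_{k=1}^n\mu_k\frac{\la_k-c_k}{\la_k^2}=0.
\]
Setting $\nu:=-\ga_0/\ga_1$ so that $\la_k=\ga_1(\mu_k-\nu)$, and introducing $B(\nu)=\sum_k(\mu_k-\nu)^{-1}$ and $C(\nu)=\sum_k c_k(\mu_k-\nu)^{-1}$, a short partial-fractions computation turns this pair into the scalar relations $\ga_1 B=C'$ and $\ga_1 n=C$, whence $C(\nu)B(\nu)=nC'(\nu)$. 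With $M(\nu)=\prod_k(\nu-\mu_k)$ and $N(\nu)=\sum_k c_k\prod_{j\ne k}(\nu-\mu_j)$ one has $B=-M'/M$ and $C=-N/M$, and the relation collapses to $(n-1)N(\nu)M'(\nu)=nN'(\nu)M(\nu)$.

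The heart of the argument is then a degree computation for $P(\nu):=(n-1)NM'-nN'M$. Although both terms have degree $2n-2$, their leading coefficients coincide (equivalently, $M^{n-1}/N^{n}$ is finite at infinity, and $P$ is the numerator of its derivative), and the next coefficient equals $n\sum_k c_k\mu_k=nY^\top J_n Y$, which is nonzero for generic $Y$; hence $\deg P=2n-3$. I would then verify the routine genericity facts: $P$ has no root at any $\nu=\mu_k$ (so $\Sigma$ stays invertible), each root gives a distinct finite $(\ga_0,\ga_1)$ with $\ga_1\ne0$, and there is no critical point on $\ga_1=0$ (here $\sum_k\mu_k=0$ is used). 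This shows the likelihood has exactly $2n-3$ critical points in autocovariance coordinates.

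Finally I would lift these through the quadratic map $(a_0,a_1)\mapsto(\ga_0,\ga_1)=(a_0^2+a_1^2,\,a_0a_1)$, whose Jacobian is singular exactly on the reversal locus $a_0=\pm a_1$. Off this locus the chain rule gives $\nabla_a\ell=0\iff\nabla_\ga\ell=0$, so each of the $2n-3$ critical autocovariances pulls back to four parameter points, i.e. two pairs modulo the sign symmetry $a\sim-a$ that leaves $\Sigma$ invariant; this contributes $2(2n-3)$ pairs. On each of the two lines $a_0=\pm a_1$ the restriction of $\ell$ is a one-variable function of the form $-n\log c-\tfrac12 K/c^2+\text{const}$, which has one critical pair — these are exactly the non-invertible critical points. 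Adding the $2(2n-3)$ generic pairs to the $2$ boundary pairs gives $4(n-1)$.

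The hard part will be the degree drop from $2n-2$ to $2n-3$: everything hinges on the cancellation of the leading coefficient of $P(\nu)$, which is precisely what separates the crude Bézout-type bound from the true count. The bookkeeping in the last step is conceptually routine but must be handled with care, since one has to confirm that the $2n-3$ generic critical autocovariances avoid the branch lines $\ga_0=\pm2\ga_1$ (so that the generic and the boundary critical points do not collide) and that it is the sign symmetry, rather than the reversal symmetry of \eqref{sym}, that is being quotiented in the stated count.
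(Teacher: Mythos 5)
The statement you are proving is presented in the paper only as a \emph{conjecture}: the authors report ML degree computations for $n=2,3,\dots$ (and prove the case $n=2$ separately as a proposition) and extrapolate the pattern $4(n-1)$, but give no general argument. Your proposal is therefore not a variant of the paper's proof --- it is a genuine proof strategy where the paper has none, and its core is sound. I checked the key steps: the simultaneous diagonalization $\Sigma=\ga_0 I_n+\ga_1 J_n$ with distinct eigenvalues $\mu_k=2\cos(k\pi/(n+1))$ and $\sum_k\mu_k=0$; the reduction of the score equations to $\ga_1 B=C'$, $\ga_1 n=C$ and hence to $P(\nu)=(n-1)NM'-nN'M=0$; the cancellation of the $\nu^{2n-2}$ coefficient of $P$ (both terms have leading coefficient $n(n-1)\sum_k c_k$) and the value $n\sum_k c_k\mu_k=n\,Y^\top J_nY=2n\sum_i Y_iY_{i+1}$ for the $\nu^{2n-3}$ coefficient, giving $\deg P=2n-3$; and the final bookkeeping $2(2n-3)+2=4(n-1)$, which reproduces the paper's $n=2$ proposition (one critical autocovariance pulling back to two sign-pairs, plus the two non-invertible pairs on $a_0=\pm a_1$). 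One point you leave implicit but that does work out: at $a_0=a_1=c\neq0$ the two partials $\partial_{a_0}\ell$ and $\partial_{a_1}\ell$ are equal, so a critical point of the restriction of $\ell$ to that line is automatically a critical point of $\ell$ itself; this is needed to legitimize counting the boundary points via the one-variable restriction.

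What still separates this from a complete proof is exactly the cluster of genericity claims you flag. Most of them can be settled uniformly in $n$: $P(\mu_k)=(n-1)c_k\prod_{j\neq k}(\mu_k-\mu_j)^2\neq0$ whenever $c_k\neq0$; the coefficient of $c_k$ in $P(\pm2)$ vanishes for all $k$ only if all $\mu_k$ coincide, so generic $Y$ avoids the branch lines $\ga_0=\pm2\ga_1$; and a root of $P$ with $C(\nu)=0$ would force a double root of $N$, excluded generically. The one item that does not follow from your degree computation alone is that the $2n-3$ roots of $P$ are \emph{distinct} for generic $Y$ (equivalently, that $\mathrm{disc}_\nu(P)$ is not identically zero as a polynomial in the $c_k$), which is what converts the count-with-multiplicity $2n-3$ into a count of actual critical points; this needs either a uniform argument or a witness per $n$, and is the main remaining gap before the conjecture can be upgraded to a theorem.
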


In contrast, the pattern for $MA(2)$ is not as clear. The first values for $n=2,3,\dots$ are recorded in Table \ref{table3}.
\begin{table}[ht]
\centering
\begin{tabular}{cc}
\hline
$n$ & ML degree \\ 
\hline
3 & 29 \\
4 & 69 \\
5 & 129 \\
6 & 205 \\
\hline
\end{tabular}
\caption{ML degrees of the $MA(2)$ model by number of sample points}\label{table3}
\end{table}

Not unusually, Gr\"obner basis computations quickly become prohibitive. However, this does not mean that our algebraic approach is not useful. In applied algebraic geometry, this often means one needs to go into numerical techniques. Indeed, as far as we know, the algebraic nature of the ML problem has not been exploited yet, and a numerical algebraic geometry approach brings both a fresh perspective and efficient computational tools. Knowing the ML degree beforehand helps homotopy continuation and monodromy methods find all solutions to the critical equations and thus guarantee that the MLE will be found. In contrast, classical local search methods may only find a local maximum of the likelihood function. One way to compare these methods is to conduct simulation studies such as the one in the next example.

\begin{example}\label{ex:simstudy}
We simulate 500 independent paths of a $MA(1)$ process with $n=8$ observations for each path. In Figure~\ref{fig2} a sample path for this process is illustrated.
\begin{figure}[ht]
  \centering
  \includegraphics[width=0.8\textwidth]{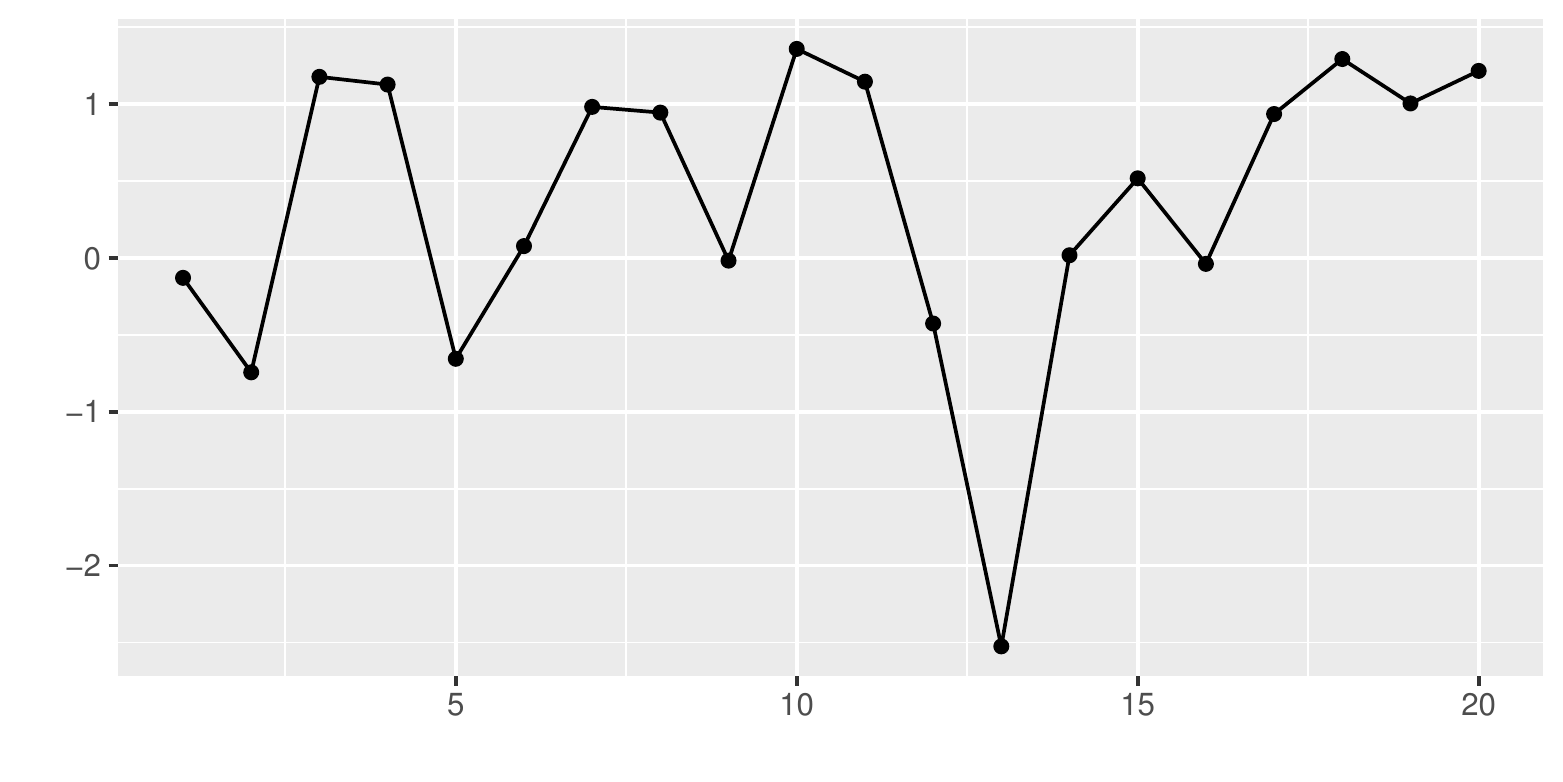}
  \caption{Sample path of a $MA(1)$ process.\label{fig2}}  
\end{figure}
As moving average parameters we take
\begin{equation*}
a_0=1\quad\text{and}\quad a_1=0.5.
\end{equation*}
The $MA(1)$ process is driven by i.i.d. standard Gaussian noise. Having simulated the $MA(1)$ process, we proceed by estimating the model parameters with the MLE in \eqref{MLE} in two different ways. 

For our first approach we use the standard \texttt{R} command \texttt{optim} for minimization of the objective function. As it is standard in time series analysis, we take the output of the innovations algorithm as the initial value for the optimization routine (cf. \cite[Section~2]{Brockwell88}). 

For our second approach we differentiate the likelihood function with respect to the moving average parameters and set the derivatives to zero. In order to compute the critical points of the likelihood function we solve the resulting polynomial system using homotopy continuation. This is implemented in the \texttt{julia} package \textbf{HomotopyContinuation} \cite{homotopyjl}. 

Finally, we evaluate the likelihood at every critical point and choose the maximal one.  The summary of the estimation results are given in Tables~\ref{summary1} and \ref{summary2} below.
\begin{table}[ht]
\centering
\begin{tabular}{rrrrr}
  \hline
 & True Value & Mean & Bias & Std \\ 
  \hline
$a_0$ & 1.0000 & 0.8642 & -0.1358 & 0.2459 \\ 
  $a_1$ & 0.5000 & 0.4503 & -0.0497 & 0.4071 \\ 
   \hline
\end{tabular}
\caption{Parameter estimation results for $MA(1)$ with $n=8$ and \texttt{R} command \texttt{optim}.\label{summary1}}
\end{table}
\begin{table}[ht]
\centering
\begin{tabular}{rrrrr}
  \hline
 & True Value & Mean & Bias & Std \\ 
  \hline
$a_0$ & 1.0000 & 0.8818 & -0.1182 & 0.2129 \\ 
  $a_1$ & 0.5000 & 0.4678 & -0.0322 & 0.5094 \\ 
   \hline
\end{tabular}
\caption{Parameter estimation results for $MA(1)$ with $n=8$ and homotopy continuation.\label{summary2}}
\end{table}

We observe that using homotopy continuation reduces the bias for both $a_0$ and $a_1$, whereas it increases the standard deviation for $a_1$ and decreases the standard deviation for $a_0$.
\end{example}

\vspace{1cm}

Finally, we close this section by reporting the ML degree of $MA(1,1)$:

\begin{proposition}
Assume that $n=4$ sample points $Y=(Y_{11},Y_{12},Y_{21},Y_{22})$ over the lattice $L = \{ 1,2 \}^2$ of a $MA(1,1)$ random field are given. The autocovariance matrix $\Sigma$ of $Y$ is
\begin{equation*}
\Sigma = \begin{pmatrix}
\ga_{00} &\ga_{01} &\ga_{10} &\ga_{11}\\
\ga_{01} &\ga_{00} &\ga_{1-1} &\ga_{10}\\
\ga_{10} &\ga_{1-1} &\ga_{00} &\ga_{01}\\
\ga_{11} &\ga_{10} &\ga_{01} &\ga_{00}
\end{pmatrix}.
\end{equation*}
The ML degree of the model is 192 over $\PP^3$. 
\end{proposition}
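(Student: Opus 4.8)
The proof will be computational, following the same template used for the ED degree of $\mathcal{MA}_{(1,1)}$. First I would substitute the parametrization of Example~\ref{Ex1}, writing each $\ga(t)$ as the corresponding quadratic form in the four parameters $a_{00},a_{01},a_{10},a_{11}$, so that $\Sigma$ becomes a symmetric $4\times4$ matrix whose entries are quadrics in these parameters. The log-likelihood
\[
\ell(a) = -\tfrac{1}{2}\log|\Sigma(a)| - \tfrac{1}{2}Y^\top\Sigma(a)^{-1}Y
\]
then becomes a function on the three-dimensional parameter space, and the ML degree is the number of complex critical points of $\ell$ for generic data $Y$.

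Next I would assemble the four score equations. Using $\partial_{a_i}\log|\Sigma| = \operatorname{tr}(\Sigma^{-1}\partial_{a_i}\Sigma)$ together with $\partial_{a_i}\Sigma^{-1} = -\Sigma^{-1}(\partial_{a_i}\Sigma)\Sigma^{-1}$, the equation $\partial_{a_i}\ell = 0$ reads
\[
-\operatorname{tr}\!\bigl(\Sigma^{-1}\partial_{a_i}\Sigma\bigr) + Y^\top\Sigma^{-1}(\partial_{a_i}\Sigma)\Sigma^{-1}Y = 0 .
\]
Since each $\partial_{a_i}\Sigma$ is linear in $a$ while $\Sigma^{-1} = (\det\Sigma)^{-1}\operatorname{adj}\Sigma$ with $\det\Sigma$ of degree $8$ and $\operatorname{adj}\Sigma$ of degree $6$, clearing the common denominator $(\det\Sigma)^2$ turns these into four polynomials in $a$ of degree $15$. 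Plugging in a generic numerical value for $Y$, they generate an ideal $J \subseteq \QQ[a_{00},a_{01},a_{10},a_{11}]$.

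The naive solution count vastly overshoots $192$, so the decisive step is to strip away the extraneous zeros by saturation. I would saturate $J$ with respect to $\det\Sigma$—multiplying through by powers of it introduced spurious solutions along the degeneracy locus $\{\det\Sigma=0\}$ where the likelihood blows up—and, following the ED-degree computation for $\mathcal{MA}_{(1,1)}$, also remove the degenerate loci \eqref{help4}–\eqref{help6} where the parametrization fails to be generically two-to-one. After saturation I expect $J$ to become zero-dimensional, and computing $\deg J$ in \texttt{Macaulay2} should return $192$, extracted exactly as \texttt{dim J, degree J} delivered the ED degree earlier. As a consistency check, the involution $a_k \leftrightarrow a_{q-k}$ of Theorem~\ref{thm:ident} pairs up the critical points, matching the generically two-to-one parametrization of $\mathcal{MA}_{(1,1)}$.

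The principal obstacle is purely the scale of the algebra: four degree-$15$ generators in four variables make the Gröbner-basis saturation expensive and likely prohibitive, precisely the phenomenon flagged in the $MA(2)$ discussion. I would therefore anticipate certifying the count $192$ through numerical algebraic geometry instead—solving the cleared score system for a random complex $Y$ by homotopy continuation with monodromy in \texttt{HomotopyContinuation} \cite{homotopyjl}, retaining only the finite solutions lying off $\{\det\Sigma=0\}$—and then invoking genericity of the ML degree to conclude. A secondary subtlety is to identify correctly which excess components to saturate, so as neither to discard genuine critical points nor to keep the degenerate ones, in particular handling the codimension-one reducibility locus $a_{00}a_{11}=a_{01}a_{10}$ of \eqref{help6} with care.
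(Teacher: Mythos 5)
The paper states this proposition without an explicit proof (it is ``reported''), but your computational recipe---assemble the score equations via $\partial_{a_i}\log|\Sigma|=\operatorname{tr}(\Sigma^{-1}\partial_{a_i}\Sigma)$, clear the denominator $(\det\Sigma)^2$ to get four degree-$15$ polynomials, saturate, and count, falling back on homotopy continuation with monodromy when Gr\"obner bases stall---is exactly the methodology the paper uses for its other ED and ML degree computations, so in spirit you are reproducing the intended argument. Two points deserve more care than you give them. First, saturating by the loci \eqref{help4}--\eqref{help6} is not part of the standard definition of the ML degree of a parametrized model: those are the ramification/singular loci of $\Gamma_{(1,1)}$, and whether critical points lying over them are to be discarded is a modeling choice that directly changes the number you report; the only saturation that is unambiguously required is by $\det\Sigma$ (to remove the zeros introduced by clearing denominators, where the likelihood is undefined). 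You flag this yourself as a ``secondary subtlety,'' but it is in fact the step on which getting $192$ rather than some other number hinges, so it should be resolved, not deferred. Second, the proposition counts critical points ``over $\PP^3$,'' while your ideal $J$ lives in the affine ring $\QQ[a_{00},a_{01},a_{10},a_{11}]$; since $\Sigma(a)=\Sigma(-a)$, the affine critical points come in $\pm$ pairs and the affine count is twice the projective one, so you must either work with the ring graded appropriately or halve the output of \texttt{degree J} to match the stated $192$. With those two normalizations pinned down, the plan is sound and consistent with the paper's approach.
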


{\bf Acknowledgments.}
We are grateful to Claudia Kl\"uppelberg for her support during this project. We thank Daniele Agostini for helpful conversations. Carlos Am\'endola was partially supported by the Deutsche Forschungsgemeinschaft (DFG) in
the context of the Emmy Noether junior research group KR 4512/1-1. Viet Son Pham acknowledges support from the graduate program TopMath at the Technical University of Munich and the Studienstiftung des deutschen Volkes.

\bigskip

\addcontentsline{toc}{section}{References}
\bibliographystyle{plain}
\setlength{\itemsep}{-1mm}
\bibliography{bib}

\begin{thebibliography}{10}

\bibitem{Amendola19}
C.~Am\'endola, N.~Bliss, I.~Burke, C.~R. Gibbons, M.~Helmer, S.~Ho{\c{s}}ten,
  E.~D. Nash, J.~I. Rodriguez, and D.~Smolkin.
\newblock The maximum likelihood degree of toric varieties.
\newblock {\em Journal of Symbolic Computation}, 92:222 -- 242, 2019.

\bibitem{Amendola16}
C.~Am{\'e}ndola, J.C. Faug{\`e}re, and B.~Sturmfels.
\newblock Moment varieties of {G}aussian mixtures.
\newblock {\em Journal of Algebraic Statistics}, 7(1):14--28, 2016.

\bibitem{Amendola18}
C.~Am{\'e}ndola, K.~Ranestad, and B.~Sturmfels.
\newblock Algebraic identifiability of {G}aussian mixtures.
\newblock {\em International Mathematics Research Notices},
  2018(21):6556--6580, 2018.

\bibitem{homotopyjl}
Paul Breiding and Sascha Timme.
\newblock Homotopycontinuation. jl: A package for homotopy continuation in
  julia.
\newblock In {\em International Congress on Mathematical Software}, pages
  458--465. Springer, 2018.

\bibitem{Brockwell88}
P.J. Brockwell and R.A. Davis.
\newblock Simple consistent estimation of the coefficients of a linear filter.
\newblock {\em Stoch. Process. Appl.}, 28(1):47--59, 1988.

\bibitem{Brockwell91}
P.J. Brockwell and R.A. Davis.
\newblock {\em Time Series: Theory and Methods}.
\newblock Springer, New York, 2nd edition, 1991.

\bibitem{catanese2006maximum}
F.~Catanese, S.~Ho{\c{s}}ten, A.~Khetan, and B.~Sturmfels.
\newblock The maximum likelihood degree.
\newblock {\em American Journal of Mathematics}, 128(3):671--697, 2006.

\bibitem{Cryer81}
J.D. Cryer and J.~Ledolter.
\newblock Small-sample properties of the maximum likelihood estimator in the
  first-order moving average model.
\newblock {\em Biometrika}, 68(3):691--694, 1981.

\bibitem{draisma2016euclidean}
J.~Draisma, E.~Horobe{\c{t}}, G.~Ottaviani, B.~Sturmfels, and R.~Thomas.
\newblock The euclidean distance degree of an algebraic variety.
\newblock {\em Foundations of computational mathematics}, 16(1):99--149, 2016.

\bibitem{Drapatz16}
M.~Drapatz.
\newblock Strictly stationary solutions of spatial {ARMA} equations.
\newblock {\em Ann. Inst. Stat. Math.}, 68(2):385--412, 2016.

\bibitem{Francos95}
J.~M. Francos, A.~Narasimhan, and J.~W. Woods.
\newblock Maximum likelihood parameter estimation of textures using a wold
  decomposition based model.
\newblock {\em IEEE Trans. Image Processing}, 4:1655--1666, 1995.

\bibitem{M2}
D.~R. Grayson and M.~E. Stillman.
\newblock Macaulay2, a software system for research in algebraic geometry.
\newblock Available at \url{http://www.math.uiuc.edu/Macaulay2/}.

\bibitem{Krishnamurthy96}
R.~Krishnamurthy, J.~W. Woods, and J.~M. Francos.
\newblock Adaptive restoration of textured images with mixed spectra using a
  generalized {W}iener filter.
\newblock {\em IEEE Trans. Image Processing}, 5:648--652, 1996.

\bibitem{Pham18b}
V.S. Pham.
\newblock L{\'e}vy-driven causal {CARMA} random fields.
\newblock 2018.
\newblock Submitted. arXiv:1805.08807.

\bibitem{sayed2001survey}
A.~H. Sayed and T.~Kailath.
\newblock A survey of spectral factorization methods.
\newblock {\em Numerical linear algebra with applications}, 8(6-7):467--496,
  2001.

\bibitem{sullivant2018algebraic}
S.~Sullivant.
\newblock {\em Algebraic Statistics}.
\newblock Graduate Studies in Mathematics. American Mathematical Society, 2018.

\bibitem{Yao06}
Q.~Yao and P.J. Brockwell.
\newblock Gaussian maximum likelihood estimation for {ARMA} models {II}:
  {S}patial processes.
\newblock {\em Bernoulli}, 12(3):403--429, 2006.

\bibitem{Zhang16}
Y.~Zhang and A.I. McLeod.
\newblock Fitting {MA(q)} models in the closed invertible region.
\newblock {\em Stat. Probab. Letters}, 76(13):1331--1334, 2016.

\end{thebibliography}

\bigskip \bigskip

\footnotesize
\noindent {\bf Authors' addresses:}

\medskip

\noindent 
Zentrum Mathematik, Technische Universit\"at M\"unchen \\ 
Boltzmannstrasse 3, 85748 Garching (b. M\"unchen), Germany \\
\texttt{carlos.amendola@tum.de }, \texttt{ vietson.pham@tum.de}

\end{document}